\newtheorem{theorem}{Theorem}
\newtheorem{prop}[theorem]{Proposition}
\newtheorem{corollary}[theorem]{Corollary}
\newtheorem{lemma}[theorem]{Lemma}
\newtheorem{definition}[theorem]{Definition}
\newtheorem{remark}[theorem]{Remark}
\numberwithin  {equation}{section}
\newcommand{\R}{\mathbb{R}}
\begin{document}
\title{Positive solutions to sublinear elliptic problems}
\author{ Zeineb Ghardallou
\\ \\
\tiny{\emph{Faculty of Sciences of Tunis, Department of Mathematical Analysis and Applications,
University Tunis El Manar, LR11ES11, 2092 El Manar1, Tunis, Tunisia
\\Departement of Mathematics, University of Wroclaw, 50-384 Wroclaw, Poland}} }
\thanks{\emph{E-mail address}: zeineb.ghardallou@ipeit.rnu.tn/ zeineb.ghardallou@gmail.com}
\maketitle

\maketitle
\begin{abstract}
Let $L$ be a second order elliptic operator $L$ with smooth coefficients defined on a domain
$\Omega $ in $\mathbb{R}^d $, $d\geq3$, such that $L1\leq 0$. We study existence and properties
of continuous solutions to the following problem
 \begin{equation}\label{00}
Lu=\varphi(\cdot,u),
\end{equation}
in  $\Omega,$ where $\Omega$ is a Greenian domain for $L$ {(possibly unbounded)} in
$\mathbb{R}^d$ and $\varphi $ is a nonnegative function on $\Omega\times [0,+\infty [$ increasing
with respect to the second variable. By means of thinness, we obtain a characterization of
$\varphi$ for which \eqref{00} has a nonnegative nontrivial bounded solution.

\end{abstract}
\textbf{Keywords}. Nonlinear elliptic problems; Regular domain; Greenian domain; Thickness and
thinness at $\infty$.\\


\section{ Introduction}

{ Let $L$ be a second order elliptic operator with smooth coefficients defined on a domain $\Omega $. We assume that $L1\leq 0$ and $\Omega $ is Greenian for $L$. 
Let $\varphi:\Omega\times]-\infty ,\infty[\to [0,\infty[$ be a measurable function that satisfies
the following hypotheses:
\begin{description}
  \item[($H_1$)]$x\mapsto \varphi(x,c)\in \mathcal{K}_d^{loc}(\Omega)$ for every $c\in
      [0,+\infty[$. \footnote{See Preliminaries for the definition of a Greenian domain and the
      Kato class $\mathcal{K}_d$.}
  \item[$(H_2)$]$t\mapsto \varphi(x_0, t ) \hbox{  continuous increasing for every given } x_0\in
      \Omega.$

  \item[$(H_3)$]$\varphi(x,t)=0$ for every $x\in \Omega$ and $t\leq 0$.
\end{description}
We study positive continuous functions satisfying
 \begin{equation}\label{5}
     Lu=\varphi(\cdot,u),\  \hbox{in}\  \Omega;  \ {\mbox{in the sense of distributions}}.
\end{equation}
Our aim is to characterize solutions to \eqref{5} in general domains as thoroughly as possible,
preferably to obtain a one to one correspondence to $L$ harmonic functions, see Theorems
\ref{bigoneone} and \ref{further}. The only assumption we make on $\Omega $ is being Greenian, so
possibly unbounded, and  the general context is essential. For bounded regular domains the approach
is rather standard.

Our work has been inspired by the results of El Mabrouk \cite{Mabrouk} and El Mabrouk and Hansen
\cite{HM} who considered the equation
\begin{equation}\label{4}
\Delta u=p(x)u^{\alpha }, \hbox{ $0<\alpha <1$ and $p\in \mathcal{L}^{\infty}_{loc}(\mathbb{R}^d)$}.
\end{equation}
The main goal of the present paper is to show how methods of potential theory, applied so
effectively in \cite{Mabrouk, HM}, can be used to obtain results in much bigger generality.{ The
latter means not only the operator but, what more important, also the semi-linear part. In
particular,
 we improve considerably} the results known for the Laplace operator in $\R ^d$   \cite{LW}, \cite{L}.}

Equation \eqref{5} with $L$ being the Laplace operator on $\R ^d$ has recently attracted a lot of
attention \cite{AACH, Bandle, Mabrouk, HM, Guo, LS, LW, Lair, LM, Ahmed} with the semilinear part
$\varphi (x,u)$ being of the form $\varphi (x,u)=p(x)u ^{\alpha }+q(x)u^{\beta }$, $p,q $ positive
functions, $0<\alpha \leq \beta $ or more generally, $\varphi (x,u)=p(x)f(u)+q(x)g(u)$. Here we do
not assume that the variables in $\varphi$ are separated. { Neither we assume any more regularity
of $\varphi $ than provided by $(H_1)$ and $(H_2)$. Positivity and monotonicity of $\varphi$,
however, are crucial.


 Solution to \eqref{5} is $L$-subharmonic. As such it may be dominated by a { $L$-superharmonic function} or not i.e. it may be very large.  
In this paper we concentrate mainly on solutions of the first kind (dominated by an
$L$-superharmonic function) leaving the ``so called'' large solutions to a subsequent paper. Let
$G_{\Omega}$ be the Green function for $\Omega $. We prove in Section \ref{greeniandom}
that all the positive solutions $u$ in a Greenian domain $\Omega$ dominated by a positive
continuous $L$-superharmonic function $s$ are of the form
\begin{equation}\label{1} u(x)+\int_{\Omega } G_{\Omega }(x,y)\varphi(y,u(y))\,dy=h_u(x)\leq s(x),
\end{equation}
where $h_u$ is the minimal $L$-harmonic majorant of $u$ in $\Omega$.
In addition, the map $u\mapsto h_u$ defined for $u\leq s$ is injective. We also give sufficient
conditions implying surjectivity of it, see Theorem \ref{bigoneone}.
In particular, adding one more and a very natural hypothesis:

\medskip
$(H_4)$ For every $c\geq 0$, the Green potential $$G_{\Omega}(\varphi (\cdot , c))=\int_{\Omega}
G_{\Omega}(\cdot ,y)\varphi(y,c)\,dy$$ of $\varphi (\cdot , c)$ is finite at least on one
point.\footnote{Then it is continuous on $\Omega$, see Preliminaries.}

\medskip \noindent
we are able to establish one-to-one correspondence between bounded positive $L$-harmonic functions
and bounded positive solutions of \eqref{5} (Corollary \ref{oneonecoresp}).


In Section \ref{thinn}, we assume $(H_1)-(H_3)$, $L1=0$ and we formulate a necessary and sufficient
conditions for the existence of a nontrivial bounded solution to \eqref{5}, Theorem
\ref{sufficient-and-necessary-condition-existence-solution}. {It is boundedness  of
\begin{equation}\label{thincon}
\int _{\Omega \setminus A} G_{\Omega}(x,y )\varphi (y, c_0)\ dy \end{equation}
where 
the set $A$ is thin at $\infty$.}  \eqref{thincon} is essentially weaker than $(H_4)$ and it
generalizes considerably Theorem 3 in \cite{Mabrouk}.


At last, the author is grateful to her advisors Ewa Damek and Mohamed Sifi for their work, constant
encouragement and precious feedback. She also want to express their gratitude to {Krzysztof Bogdan,
Konrad Kolesko, and Mohamed Selmi} for their helpful and kindly suggestions.

\section{Preliminaries}
For every open set $\Omega$ of $\mathbb{R}^d$ with ($d\geq 3$) let $\mathcal{B}(\Omega) (resp. \
\mathcal{C}(\Omega))$ be the set of all real valued Borel measurable (resp. continuous) functions
on $\Omega$. We also consider $\mathcal{C}^1(\Omega)$ -  the space of continuously differentiable
functions on $\Omega$, { $\mathcal{C}^{\infty}(\Omega)$ - the space of infinitely differentiable
functions on $\Omega$, $\mathcal{C}^{\infty}_c(\Omega)$ - the space of functions in
$\mathcal{C}^{\infty}(\Omega)$ with compact support,} $\mathcal{C}^{2,\alpha}(\Omega)$ - the space
of functions with the second derivative being $\alpha$-H\"older continuous, $\mathcal{L}^1(\Omega)$
(resp. $\mathcal{L}^1_{loc}(\Omega)$)- the set of integrable (resp. locally integrable) functions
in $\Omega$, $\mathcal{L}^{\infty}(\Omega)$ (resp. $\mathcal{L}^{\infty}_{loc}(\Omega)$) the set of
bounded (resp. locally bounded) functions in $\Omega$. Also, for every set $\mathcal{F}$ of
numerical functions, we denote by $\mathcal{F}^+$ the set of all functions in $\mathcal{F}$ which
are nonnegative.

Let $\Omega$ be a domain in $ \mathbb{R}^d, d\geq 3$. { We assume that $L$ defined in $\Omega$ is a
second order elliptic operator with smooth coefficients i.e.
$$L=\sum_{1\leq i,j \leq d}a_{i,j}(x)\partial_i \partial_j+ \sum_{1\leq i \leq d} b_i(x)\partial_i+c(x) ,$$
 where  $a_{i,j}, b_i,c\in \mathcal{C}^{\infty }(\Omega )$, }$a_{i,j}(x)=a_{j,i}(x)$, $1\leq i,j\leq d$ and for every $x\in \Omega$ the quadratic form 
$$\sum_{1\leq i,j \leq d}a_{i,j}(x) \xi_i \xi_j  $$
is strictly positive definite. The latter means that $\displaystyle \sum_{1\leq i,j \leq d} a_{i,j}(x) \xi_i \xi_j >0$ for every $x\in \Omega$ and $\xi \in \mathbb{R}^d  \backslash   \{0\}.$ 
Notice that $L$ is locally uniformly elliptic in $\Omega$.

{ Suppose that $h\in \mathcal{L}^1_{loc}(\Omega )$ and $Lh=0$ in the sense of distributions. Then
$h\in\mathcal{C}^{\infty}(\Omega)$ \footnote{i.e. $h$ is equal a.e. to a smooth function} and
$Lh=0$ holds in the strong sense. Such functions will be called L-harmonic. { We denote
$\mathcal{H}(\Omega)$ the set of $L$-harmonic functions in $\Omega$.}
Let $v\in \mathcal{L}^1_{loc}(\Omega )$. We say that $v$ is $L$-subharmonic if $Lv\geq0$ in the
distributional sense. Then $v$ is equal a.e. to an upper semi-continuous function satisfying ``so
called'' sub-mean value property. A function $s$ such that $-s$ is $L$-subharmonic on $\Omega$ will
be called $L$-superharmonic on $\Omega$. We denote $\mathcal{S}(\Omega)$ the set of
$L$-superharmonic functions in $\Omega$.

Let $u\in \mathcal{C}^+(\Omega )$. We say that $u$ is a solution to equation \eqref{5} if
$\varphi(\cdot,u)$ is locally integrable on $\Omega $ and for all $\psi\in
\mathcal{C}_c^{\infty}(\Omega )$ we have
$$ \int_{\Omega } u L^*(\psi) - \int_{\Omega } \varphi(\cdot,u) \psi=0,$$ {where $L^*$ is the adjoint operator $L$ in $\Omega$.}
Notice that if $(H_1),(H_2)$ are satisfied, then $\varphi(\cdot,u)$ is always locally integrable on
$\Omega $.}

We say that $\Omega $ is Greenian if it has the Green function.\\
$G_{\Omega }: \Omega \times \Omega \to \R$ is called the Green function in $\Omega$ corresponding
to $L$ if $G_{\Omega }$ is $C^{\infty }$ outside $\{ (x,x): x\in \Omega \}$,
\begin{equation}\label{Greendelta}
LG_{\Omega}(\cdot,y)=-\delta_y, \hbox{ where $\delta_y$ denotes the Dirac measure at $y$,}\end{equation}
and if $0\leq h(x)\leq G(x,y)$, $Lh=0$ then $h=0$.

\begin{definition}
A Borel measurable function $\psi$ on $\Omega$ belongs locally to the Kato class (i.e. $\psi \in
\mathcal{K}_d^{loc}(\Omega)$) if $\psi$ satisfies \begin{equation}\label{kato-class}
\lim\limits_{\alpha\to 0}\sup_{x\in K}\int_{\Omega\cap(|x-y|\leq
\alpha)}\frac{|\psi(y)|}{|x-y|^{d-2}}\,dy=0
\end{equation}
for every compact set $K\subset \Omega $. If \eqref{kato-class} holds for every $x\in \Omega $
instead of $K$, we say that $\psi $ is in Kato class and we write $\psi \in \mathcal{K}_d(\Omega)$.

\end{definition}

Clearly, if $\psi\in \mathcal{L}^{\infty}_{loc}(\Omega)$ then $\psi\in
\mathcal{K}_d^{loc}(\Omega)$.
\begin{prop}(see e.g. ~\cite{Maagali/Malek})
Let $\psi\in \mathcal{K}_d(\Omega)$. Then for every $M>0$, we have
$$\int_{\Omega\cap(|y|\leq M)}|\psi(y)|\,dy<\infty.$$ In particular, if $\Omega$ is a bounded
domain, then $\psi\in \mathcal{L}^1(\Omega)$.
\end{prop}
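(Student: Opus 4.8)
The statement to prove is: if $\psi\in\mathcal{K}_d(\Omega)$, then for every $M>0$ we have $\int_{\Omega\cap(|y|\leq M)}|\psi(y)|\,dy<\infty$, and consequently $\psi\in\mathcal{L}^1(\Omega)$ when $\Omega$ is bounded. The plan is to exploit the defining limit \eqref{kato-class} to produce a single radius $\alpha_0>0$ that works uniformly, then cover the bounded piece $\Omega\cap\{|y|\leq M\}$ by finitely many balls of that radius and estimate each contribution.

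First I would fix $M>0$ and set $\Omega_M=\Omega\cap\{|y|\leq M\}$; I may assume this set is nonempty, otherwise there is nothing to prove. Apply the definition of $\mathcal{K}_d(\Omega)$ with, say, $\varepsilon=1$: there is $\alpha_0>0$ such that for every $x\in\Omega$,
\[
\int_{\Omega\cap(|x-y|\leq\alpha_0)}\frac{|\psi(y)|}{|x-y|^{d-2}}\,dy\leq 1.
\]
Here the key point is that the condition \eqref{kato-class} for $\psi\in\mathcal{K}_d(\Omega)$ holds at \emph{every} $x\in\Omega$, so $\alpha_0$ does not depend on $x$. Since $|x-y|\leq\alpha_0$ forces $|x-y|^{-(d-2)}\geq\alpha_0^{-(d-2)}$, this immediately gives the cruder bound
\[
\int_{\Omega\cap(|x-y|\leq\alpha_0)}|\psi(y)|\,dy\leq\alpha_0^{\,d-2}\qquad\text{for all }x\in\Omega.
\]

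Next I would cover the compact set $\overline{\Omega_M}$ (a closed bounded subset of $\mathbb{R}^d$) by finitely many balls $B(x_1,\alpha_0),\dots,B(x_N,\alpha_0)$ with centers $x_k\in\overline{\Omega_M}$. A mild technical point: the centers should lie in $\Omega$ so that the uniform estimate applies; one can either take the $x_k$ in $\Omega_M\subset\Omega$ directly when covering $\Omega_M$ (enlarging the radii slightly if one insists on covering the closure, or just covering $\Omega_M$ itself, which is all that is needed), or note that $\Omega_M\subset\bigcup_{k=1}^N B(x_k,\alpha_0)$ with $x_k\in\Omega_M$ by compactness of $\overline{\Omega_M}$ and a standard Lebesgue-number/finite-subcover argument. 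Then
\[
\int_{\Omega_M}|\psi(y)|\,dy\leq\sum_{k=1}^N\int_{\Omega\cap B(x_k,\alpha_0)}|\psi(y)|\,dy\leq N\,\alpha_0^{\,d-2}<\infty,
\]
which is the first assertion. For the second assertion, if $\Omega$ is bounded choose $M$ large enough that $\Omega\subset\{|y|\leq M\}$, so $\Omega_M=\Omega$ and the integral $\int_\Omega|\psi|$ is finite, i.e. $\psi\in\mathcal{L}^1(\Omega)$.

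I do not expect a serious obstacle here; the proposition is essentially a compactness-plus-uniform-bound argument. The only point requiring a little care is the one flagged above: making sure the finite cover can be taken with centers in $\Omega$ (not merely in $\overline{\Omega_M}\supset\Omega_M$, which could stick out of $\Omega$) so that the uniform Kato bound is legitimately applied at each center — this is handled by covering $\Omega_M$ by balls centered at points of $\Omega_M$ and using the finite subcover extracted from the open cover $\{B(x,\alpha_0):x\in\Omega_M\}$ of the relatively compact set $\Omega_M$.
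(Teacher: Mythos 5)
Your argument is correct, and it is the standard one: the paper itself gives no proof of this proposition, citing it from M\^aagli--Zribi, and the expected argument is exactly your uniform-radius-plus-finite-cover reasoning. You also correctly handle the only delicate point, namely choosing the finitely many ball centers inside $\Omega\cap\{|y|\leq M\}$ (hence inside $\Omega$) so that the uniform Kato bound applies at each center.
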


Green potentials of functions belonging to the Kato class will play the main part in what follows.
We are going to recall their basic properties. For a more complete overview of potential theory we
refer the reader to the Appendix of \cite{Ghardallou} and Section 1 of \cite{Ghardallou-thesis}.

A bounded domain $D$ contained with its closure in $\Omega$ is called regular if each
$f\in\mathcal{C}(\partial{D})$ admits a continuous extension $H_Df$ on $\overline{D}$ such that
$H_Df$ is $L$-harmonic in $D$.
Let $G_D$ be the Green function for $L$ in $D$. Then by \cite{Miranda}, paragraph 8,
    there is $C>0$ such that
\begin{equation}\label{green}
G_D(x,y)\leq C|x-y|^{-d+2}, \hbox{  } D\times D.
\end{equation}
and so proceeding as in
 ~\cite{Maagali/Malek} we obtain
\begin{prop}\label{kato-class-in-bounded-domain}(see e.g. ~\cite{Maagali/Malek} and \cite{Hueber.Sieveking})

Let $D$ be a bounded regular domain in ${\mathbb{R}}^d$ ($d\geq 3$) and $\psi \in\mathcal{
K}_d(D)$, then
$$G_{D}\psi \in
\mathcal{C}_0(D).$$
\end{prop}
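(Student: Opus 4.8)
The plan is to dominate $G_D\psi$ by the Newtonian potential of $|\psi|$ and then run two $\varepsilon$-splitting arguments: one giving finiteness and continuity in the interior, the other giving decay at the boundary. First I would record the two inputs we are entitled to use: by \eqref{green} we have $0\le G_D(x,y)\le C|x-y|^{2-d}$ on $D\times D$, and by the previous proposition $\psi\in\mathcal{L}^1(D)$, since $D$ is bounded and $\psi\in\mathcal{K}_d(D)$. Writing $\psi=\psi^+-\psi^-$ and $G_D\psi=G_D\psi^+-G_D\psi^-$, it suffices to treat $\psi\ge 0$; then $0\le G_D\psi(x)\le C\int_D|x-y|^{2-d}\psi(y)\,dy$, and $G_D(\cdot,y)$ is $\mathcal{C}^\infty$, in particular continuous, off the diagonal.

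For interior continuity, fix $x_0\in D$ and $\varepsilon>0$. Using $\psi\in\mathcal{K}_d(D)$, choose $\alpha>0$ so small that $\sup_{x\in D}\int_{D\cap\{|x-y|\le 2\alpha\}}|x-y|^{2-d}\psi(y)\,dy<\varepsilon/C$, and split $G_D\psi(x)$ into the integral over $D\cap B(x_0,\alpha)$ and the integral over $D\setminus B(x_0,\alpha)$. If $x\in B(x_0,\alpha)$ and $y\in B(x_0,\alpha)$ then $|x-y|<2\alpha$, so the first piece is at most $C\int_{D\cap\{|x-y|\le 2\alpha\}}|x-y|^{2-d}\psi(y)\,dy<\varepsilon$, uniformly in such $x$ (and likewise at $x=x_0$). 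For $x\in B(x_0,\alpha/2)$ and $y\in D\setminus B(x_0,\alpha)$ we have $|x-y|>\alpha/2$, hence $G_D(x,y)\psi(y)\le C(\alpha/2)^{2-d}\psi(y)\in\mathcal{L}^1(D)$, a bound independent of $x$, while $G_D(x,y)\psi(y)\to G_D(x_0,y)\psi(y)$ pointwise on $D\setminus B(x_0,\alpha)$ as $x\to x_0$; dominated convergence then handles the second pieces. Therefore $\limsup_{x\to x_0}|G_D\psi(x)-G_D\psi(x_0)|\le 2\varepsilon$, and letting $\varepsilon\to 0$ gives continuity of $G_D\psi$ on $D$; applying the same split once at a fixed point shows $G_D\psi<\infty$.

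For the boundary decay, fix $\xi\in\partial D$ and $\varepsilon>0$. Since $D$ is regular, every boundary point is a regular point, so for each fixed $y\in D$ the potential $G_D(\cdot,y)$ satisfies $G_D(x,y)\to 0$ as $x\to\xi$. Choose $\delta>0$ with $\sup_{x\in D}\int_{D\cap\{|x-y|\le 2\delta\}}|x-y|^{2-d}\psi(y)\,dy<\varepsilon/C$. For $x\in D\cap B(\xi,\delta/2)$, split $G_D\psi(x)$ over $D\cap B(\xi,\delta)$ and $D\setminus B(\xi,\delta)$: on the first set $|x-y|<2\delta$, so that piece is $<\varepsilon$ as before, while on the second set $G_D(x,y)\psi(y)\le C(\delta/2)^{2-d}\psi(y)\in\mathcal{L}^1(D)$ uniformly in $x$ and $G_D(x,y)\to 0$ pointwise, so by dominated convergence that piece tends to $0$ as $x\to\xi$. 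Hence $\limsup_{x\to\xi}G_D\psi(x)\le\varepsilon$, so $G_D\psi\to 0$ at $\partial D$. Combined with the interior continuity and the decomposition into $\psi^\pm$, this yields $G_D\psi\in\mathcal{C}_0(D)$.

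The two splittings are routine; the only places the hypotheses genuinely enter are the \emph{uniform-in-$x$} control of the near-diagonal remainder supplied by the Kato condition, and the fact that regularity of $D$ forces $G_D(\cdot,y)$ to vanish at $\partial D$ — this last point, being the sole use of the word ``regular'', is the step I would expect to need the most care in justifying (via the classical fact that a potential on a regular bounded domain vanishes at every regular boundary point).
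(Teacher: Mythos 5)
Your argument is correct and is essentially the standard proof that the paper points to rather than writes out (it only cites M\^aagli--Zribi and Hueber--Sieveking and observes that the bound \eqref{green} allows one to proceed as there): domination of $G_D$ by $C|x-y|^{2-d}$, uniform near-diagonal smallness from the Kato condition, dominated convergence off the diagonal using $\psi\in\mathcal{L}^1(D)$, and vanishing of $G_D(\cdot,y)$ at the boundary. The boundary step you flag is indeed the only place where regularity of $D$ enters, and it is covered by the classical fact that $G_D(\cdot,y)$ is a potential which is bounded and $L$-harmonic near $\partial D$, hence tends to $0$ at every regular boundary point.
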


\begin{prop}(See \cite{Ghardallou})\label{contpot}
Let $\Omega$ be a Greenian domain, $\varphi\in \mathcal{K}_d^{loc}(\Omega)$ and there exists
$x_0\in \Omega$ such that $G_{\Omega}|\varphi |(x_0)$ is finite then $G_{\Omega}\varphi \in
\mathcal{C}(\Omega).$

\end{prop}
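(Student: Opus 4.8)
The plan is to reduce the statement, via an exhaustion of $\Omega$, to the bounded case already settled in Proposition~\ref{kato-class-in-bounded-domain}: on each piece of the exhaustion the near-diagonal singularities get absorbed into a continuous term, and what remains is an $L$-harmonic function.

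First I would reduce to $\varphi\ge 0$. Writing $\varphi=\varphi^+-\varphi^-$ we still have $\varphi^\pm\in\mathcal K_d^{loc}(\Omega)$, since $|\varphi^\pm|\le|\varphi|$, and $G_\Omega\varphi^\pm(x_0)\le G_\Omega|\varphi|(x_0)<\infty$; so it suffices to prove continuity of $G_\Omega\psi$ whenever $\psi\in\mathcal K_d^{loc}(\Omega)^+$ has finite Green potential at a point, and then subtract. Assume therefore $\varphi\ge 0$, and fix an exhaustion $\Omega=\bigcup_m D_m$ by regular bounded domains with $\overline{D_m}\subset D_{m+1}$. Since continuity is local and every point of $\Omega$ lies in $D_m$ for large $m$, it is enough to show $G_\Omega\varphi\in\mathcal C(D_m)$ for each $m$ large enough that $x_0\in D_m$.

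Fix such an $m$. By Tonelli's theorem, for $x\in D_m$ we may split $G_\Omega\varphi(x)=G_{D_m}\varphi(x)+R_m(x)$, where
\[
R_m(x)=\int_{D_m}\big(G_\Omega(x,y)-G_{D_m}(x,y)\big)\,\varphi(y)\,dy+\int_{\Omega\setminus D_m}G_\Omega(x,y)\,\varphi(y)\,dy .
\]
Because $\overline{D_m}$ is a compact subset of $\Omega$, applying the defining condition of $\mathcal K_d^{loc}(\Omega)$ with $K=\overline{D_m}$ shows $\varphi|_{D_m}\in\mathcal K_d(D_m)$, hence $G_{D_m}\varphi\in\mathcal C_0(D_m)$ by Proposition~\ref{kato-class-in-bounded-domain}. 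For the remainder, recall (see the Appendix of \cite{Ghardallou}) that for each fixed $y\in D_m$ the function $x\mapsto G_\Omega(x,y)-G_{D_m}(x,y)$ is nonnegative and $L$-harmonic on $D_m$ --- it is the harmonic part of $G_\Omega(\cdot,y)$ in $D_m$ --- while for each fixed $y\in\Omega\setminus D_m$ the function $x\mapsto G_\Omega(x,y)$ is nonnegative and $L$-harmonic on $D_m$. Thus $R_m$ is the integral over $\Omega$ of a jointly measurable family of nonnegative $L$-harmonic functions on $D_m$. Interchanging this integral with the $L$-harmonic measure average over any sphere $\partial B(x,r)$ with $\overline{B(x,r)}\subset D_m$ (again legitimate by Tonelli, everything being nonnegative) shows that $R_m$ satisfies the spherical averaging identity characterizing $L$-harmonicity; and $R_m(x_0)=G_\Omega\varphi(x_0)-G_{D_m}\varphi(x_0)<\infty$ with $x_0\in D_m$. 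One then concludes that $R_m$ is $L$-harmonic, in particular continuous and real-valued, on $D_m$, so that $G_\Omega\varphi=G_{D_m}\varphi+R_m\in\mathcal C(D_m)$, as desired.

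I expect the delicate point to be the last one: upgrading ``$R_m$ obeys the spherical averaging identity and is finite at the single point $x_0$'' to ``$R_m$ is finite on all of $D_m$ and $L$-harmonic there''. This is where I would use the connectedness of $D_m$ together with the minimum principle (or a Harnack chain) to propagate finiteness from $x_0$, using that $R_m$ is lower semicontinuous by Fatou; crucially, all the near-diagonal estimates have already been spent in Proposition~\ref{kato-class-in-bounded-domain}, so no further Kato-class input is needed at this stage. A secondary, routine point is to verify the joint measurability and the Tonelli interchanges.
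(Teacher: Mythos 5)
Your argument is sound, and there is nothing in this paper to compare it against line by line: Proposition \ref{contpot} is not proved here but quoted from \cite{Ghardallou}. Your route --- reduce to $\varphi\ge 0$, exhaust $\Omega$ by regular domains, note that the local Kato condition with $K=\overline{D_m}$ gives $\varphi|_{D_m}\in\mathcal K_d(D_m)$ so that $G_{D_m}\varphi\in\mathcal C_0(D_m)$ by Proposition \ref{kato-class-in-bounded-domain}, and then show the remainder $R_m$ is $L$-harmonic --- is the standard argument and in the spirit of the reference. Two refinements close the step you flag as delicate. First, choose the exhaustion by \emph{connected} regular domains containing $x_0$; then finiteness propagates most cleanly not by the minimum principle but by Harnack applied under the integral sign: for a compact connected $K\subset D_m$ with $x_0\in K$ there is $C_K$ such that $h(x)\le C_K\,h(x_0)$ for every nonnegative $L$-harmonic $h$ on $D_m$, hence $R_m(x)\le C_K R_m(x_0)<\infty$ uniformly on $K$, so $R_m$ is locally bounded (not merely finite on a chain). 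Second, with local boundedness in hand, Tonelli gives, for every regular ball $B$ with $\overline B\subset D_m$ and every $x\in B$, that $R_m(x)$ equals the integral of $R_m|_{\partial B}$ against the $L$-harmonic measure $\mu^B_x$; since $R_m|_{\partial B}$ is a bounded Borel function, this harmonic-measure integral is $L$-harmonic in $B$, so $R_m$ coincides on $B$ with an $L$-harmonic function and is therefore continuous. Together with $G_\Omega\varphi=G_{D_m}\varphi+R_m$ on $D_m$ and the reduction $\varphi=\varphi^+-\varphi^-$ (both parts lie in $\mathcal K_d^{loc}(\Omega)$ and have finite potential at $x_0$), this yields $G_\Omega\varphi\in\mathcal C(\Omega)$ as claimed; no further Kato-class input is needed beyond Proposition \ref{kato-class-in-bounded-domain}, exactly as you anticipated.
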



\section{Solution to $Lu=\varphi (\cdot , u)$ in a regular domain}\label{regular}

{\it Unless otherwise mentioned, throughout the paper we will suppose that $L$ is a second order
elliptic operator with smooth coefficients satisfying $L1\leq 0$ defined in the domain $\Omega $
that is Greenian for $L$ and $\varphi$ satisfies $(H_1)-(H_3)$.}

Let $D$ be a regular bounded domain such that $\overline{D}\subset\Omega$. Given $f\in C^+(\partial
D)$ there is $u\in C(\overline{ D})$ such that
 \begin{equation}
 \left\{
  \begin{array}{ll}
    Lu-\varphi(\cdot,u)=0, & \hbox{in $D$; in the sense of distributions;} \\
    u\geq 0,               & \hbox{in $D$;} \\
   u=f, & \hbox{on $\partial D$.}
  \end{array}
\right.
\label{3}
\end{equation}
Moreover, $u$ is related to the solution $H_Df$ of the classical Dirichlet problem with the
boundary data $f$ in the following way:
\begin{theorem}[Solution of \eqref{3}
 in a regular domain]\label{resolution-in-regular-domain}
Let $f\in\mathcal{C}^+(\partial D)$. Then there exists a unique solution $u$ 
to  \eqref{3}. Furthermore, we have:
 \begin{equation}\label{oneregul}
 u=H_Df(x)-\int_D G_D(x,y)\varphi(y,u(y))\,dy,\hbox{
  for every $ x \in D$}.\end{equation}
 \end{theorem}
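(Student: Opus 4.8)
The plan is to prove existence by a monotone iteration (sub/supersolution) scheme combined with elliptic regularity and potential theory, and then prove uniqueness by a comparison argument.

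\medskip

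\textbf{Existence via monotone iteration.} First I would produce a natural supersolution. Since $L1\le 0$ and $f\in\mathcal C^+(\partial D)$ is bounded, $H_Df$ is a bounded nonnegative $L$-harmonic function on $D$ with boundary values $f$; because $\varphi\ge 0$ one expects $u\le H_Df$. So I set $\overline u:=H_Df$ and $\underline u:=0$, noting $L\underline u=0\le \varphi(\cdot,\underline u)=0$ (by $(H_3)$) and $L\overline u=0\ge \varphi(\cdot,\overline u)$ only formally; the right way is to build the solution as a fixed point of $T v:=H_Df-G_D(\varphi(\cdot,v))$. Starting from $u_0:=H_Df$, define inductively $u_{n+1}=T u_n$. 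Using $(H_2)$ (monotonicity of $\varphi$ in the second variable), positivity of $G_D$, and Proposition~\ref{kato-class-in-bounded-domain} (so that $G_D(\varphi(\cdot,c))\in\mathcal C_0(D)$ for each constant $c$, hence the potentials are finite and continuous), one checks by induction that $0\le u_{n+1}\le u_n\le H_Df$: indeed $u_1=H_Df-G_D(\varphi(\cdot,u_0))\le H_Df=u_0$, and if $u_{n+1}\le u_n$ then $\varphi(\cdot,u_{n+1})\le\varphi(\cdot,u_n)$ gives $u_{n+2}\ge u_{n+1}$ from the previous step's sign and $u_{n+2}\le u_{n+1}$ from monotone dependence on $v\mapsto Tv$. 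So $(u_n)$ is a decreasing sequence bounded below by $0$; let $u:=\lim_n u_n$ pointwise, with $0\le u\le H_Df$, in particular $u$ is bounded.

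\medskip

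\textbf{Passing to the limit.} Here the key point is regularity: each $u_n$ is continuous, $\varphi(\cdot,u_n)$ is dominated by $\varphi(\cdot,\|H_Df\|_\infty)\in\mathcal K_d(D)$, so by dominated convergence $\varphi(\cdot,u_n)\to\varphi(\cdot,u)$ in $\mathcal L^1(D)$ (using $(H_2)$ for the pointwise convergence of the integrand and Proposition~2.\? for integrability), and the uniform Kato bound gives equicontinuity of the potentials $G_D(\varphi(\cdot,u_n))$, so $G_D(\varphi(\cdot,u_n))\to G_D(\varphi(\cdot,u))$ uniformly on $\overline D$. Hence $u_n\to u$ uniformly, $u\in\mathcal C(\overline D)$, $u=f$ on $\partial D$, and passing to the limit in $u_{n+1}=H_Df-G_D(\varphi(\cdot,u_n))$ yields \eqref{oneregul}. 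Applying $L$ to \eqref{oneregul} in the sense of distributions and using \eqref{Greendelta} (i.e. $L(G_D\mu)=-\mu$) together with $LH_Df=0$ gives $Lu=\varphi(\cdot,u)$ in $D$; since $\varphi(\cdot,u)\in\mathcal K_d^{loc}(D)\subset\mathcal L^1_{loc}$, elliptic regularity makes $u$ a genuine distributional solution of \eqref{3}, and $u\ge 0$, $u=f$ on $\partial D$ are already established.

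\medskip

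\textbf{Uniqueness and the main obstacle.} For uniqueness, suppose $u,v$ both solve \eqref{3}. Both satisfy the integral identity \eqref{oneregul} (this itself needs the representation argument: $w:=u+G_D(\varphi(\cdot,u))$ is $L$-harmonic, bounded, with boundary data $f$, hence $w=H_Df$ by uniqueness of the classical Dirichlet problem — the $L1\le 0$ hypothesis and the regularity of $D$ guarantee this). Then $u-v=G_D(\varphi(\cdot,v))-G_D(\varphi(\cdot,u))=G_D\big(\varphi(\cdot,v)-\varphi(\cdot,u)\big)$. On the set $\{u>v\}$ the integrand $\varphi(\cdot,v)-\varphi(\cdot,u)\le 0$ by $(H_2)$, so $u-v\le$ a nonpositive potential there — a sign/maximum-principle contradiction unless $\{u>v\}=\emptyset$; symmetrically $\{v>u\}=\emptyset$, so $u=v$. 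I expect the main technical obstacle to be the uniform convergence / continuity step: one must carefully justify that the Kato-class bound is uniform along the iteration (which it is, since all $u_n$ are bounded by the single function $H_Df$) so that Proposition~\ref{kato-class-in-bounded-domain} applies with a fixed majorant and delivers both the continuity of the limit and the convergence in \eqref{oneregul}; handling the distributional identity \eqref{Greendelta} carefully (rather than assuming smoothness of $u$) is the other delicate point.
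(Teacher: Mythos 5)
There is a genuine gap in the existence step: the iteration map $Tv:=H_Df-G_D(\varphi(\cdot,v))$ is order-\emph{reversing}, not order-preserving. By $(H_2)$, $v_1\le v_2$ gives $\varphi(\cdot,v_1)\le\varphi(\cdot,v_2)$, hence $G_D(\varphi(\cdot,v_1))\le G_D(\varphi(\cdot,v_2))$ and therefore $Tv_1\ge Tv_2$. Consequently, starting from $u_0=H_Df$ you do get $u_1\le u_0$, but then $u_2=Tu_1\ge Tu_0=u_1$, and the iterates oscillate instead of decreasing; your own induction sentence asserts simultaneously $u_{n+2}\ge u_{n+1}$ and $u_{n+2}\le u_{n+1}$, which signals the breakdown. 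What the scheme actually produces are two interlaced monotone subsequences with limits $\underline u\le\overline u$ satisfying $T\overline u=\underline u$ and $T\underline u=\overline u$; to force $\underline u=\overline u$ you would need extra structure (for instance a local Lipschitz condition in $t$, allowing the standard trick of adding $\lambda u$ to both sides to restore monotonicity, or concavity/sublinearity of $\varphi(x,\cdot)$), none of which is available under $(H_1)$--$(H_3)$, where $\varphi(x,\cdot)$ is merely continuous and increasing. This is exactly why the paper does not iterate monotonically but applies the Schauder fixed point theorem to $T$ on the bounded closed convex set $C=\{u\in\mathcal{C}(\overline D):\ \alpha\le u\le\beta\}$ with $\beta=\sup_{\overline D}H_Df$ and $\alpha=\inf_{D}\bigl(H_Df-G_D(\varphi(\cdot,\beta))\bigr)$; there compactness and continuity of $T$ (coming from the Kato-class bound $\varphi(\cdot,u)\le\varphi(\cdot,\beta)$ and Proposition \ref{kato-class-in-bounded-domain}) replace any monotonicity of the iteration. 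Your remarks on passing to the limit and on recovering $Lu=\varphi(\cdot,u)$ from \eqref{oneregul} would be fine once a convergent sequence or fixed point exists, but the existence mechanism itself is the missing idea.

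The uniqueness sketch also does not work as written. From $u-v=G_D\bigl(\varphi(\cdot,v)-\varphi(\cdot,u)\bigr)$ you cannot infer a sign of $u-v$ on $\{u>v\}$: for $x$ in that set the potential integrates $\varphi(y,v(y))-\varphi(y,u(y))$ over \emph{all} $y\in D$, including points where this integrand is positive, so no pointwise sign contradiction arises. The correct argument is the localization used in Lemma \ref{comparaison-semi-elliptic}: on the open set $V=\{u<v\}$ one has $L(u-v)=\varphi(\cdot,u)-\varphi(\cdot,v)\le 0$ by $(H_2)$, while $\liminf (u-v)\ge 0$ on $\partial V$ (equality of $u$ and $v$ on $\partial V\cap D$, common boundary data $f$ on $\partial V\cap\partial D$), so the minimum principle for $L$-superharmonic functions forces $V=\emptyset$; the symmetric argument gives $u=v$, and comparison with the trivial solution $0$ gives $u\ge 0$. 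This is the route the paper takes both for uniqueness and for positivity.
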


 \begin{proof}
The statement was proved { for more general $\varphi$ } in \cite{BH} in the context of balayage
spaces. We may also refer the reader to \cite{Ghardallou} where the equation $Lu+\varphi (\cdot ,
u)=0$ was considered and the proof is completely analogous. However, for the readers convenience we
recall the definition of an operator, to which the Schauder theorem is applied.

 Given $f\in C^+(\partial D)$ let $$\beta=\sup_{x\in \overline{D}}H_Df(x) \hbox {   and   }
\alpha=\inf_{x\in D}[H_Df(x)- G_D(\varphi(\cdot,\beta))(x)].$$ We consider the set
$C=\{u\in\mathcal{C}(\overline{D}) , \alpha   \leq   u   \leq   \beta\} $ with the topology of
uniform convergence. So $C$ is a bounded, closed and convex.
 Let $T:\mathcal{C}({\overline{D}})     \to           \mathcal{C}({\overline{D}})$ be the map defined by
$$  u        \mapsto        H_Df-G_D(\varphi(\cdot,u)).$$
Since $u$ is bounded on $D$, $\varphi(\cdot,u)\in \mathcal{K}_d(D)$ 
and so  $G_D(\varphi(\cdot,u))\in \mathcal{C}_0(D)$. Therefore, $T$ is well defined and
$T(C)\subset C$. Indeed,$$\alpha\leq H_Df(x)-G_D(\varphi(\cdot,\beta))(x) \leq Tu(x)\leq H_Df(x)
\leq \beta,$$ for every $u\in C$ and every $x\in \overline{D}.$

Following the same lines of the proof of Theorem 8 in \cite{Ghardallou} and using the Schauder
Theorem we conclude that $T$ has a fixed point in $C$ i.e. $$u=H_Df-G_D(\varphi(\cdot,u)).$$

Moreover, 
$L(G_D(\varphi(\cdot,u)))=-\varphi(\cdot,u), $ and $ u=f \hbox { on }\partial D .$ By Lemma
\ref{comparaison-semi-elliptic} below, using the fact that zero is a trivial solution, we get that
$u$ is positive
 in $D$ and unique.
\end{proof}

 We complete the section with a Lemma that gives comparison between sub-solutions and super-solutions to \eqref{5} in $D$.
Suppose that $u$ is a continuous function. We say that $u$ is a subsolution if
 $Lu - \varphi (\cdot ,u)\geq 0$ or a supersolution if $Lu - \varphi (\cdot ,u)\leq 0$ in the sense of distributions.
   The Lemma holds in a considerable generality: we require only that the function $\varphi : D \times \R \mapsto \R $ is increasing with respect to the second variable.

\begin{lemma}[Comparison with values on the boundary]\label{comparaison-semi-elliptic}
\vspace{0.2cm}

Let $u,v \in \mathcal{C}(D)$, $Lu,Lv\in{\mathcal{L}}^1_{loc}(D)$  and let
$\varphi:D\times\mathbb{R}\rightarrow\mathbb{R}$ be an increasing function with respect to the
second variable . If $$\left\{
 \begin{array}{ll}
 Lu-\varphi(\cdot,u)\leq Lv-\varphi(\cdot,v), \\
 \liminf\limits_{\underset{y\in\partial D}{x\to y}}{(u-v)(x)}\geq0.
 \end{array}
 \right.
$$
Then:$$u-v\geq0\,\,in\,\,D. \footnote{ By the same argument, the result remains true for $D$
being unbounded domain under assumption $\liminf\limits_{\underset{y\in\partial D}{x\to
y}}{(u-v)(x)}\geq0 $ and $\liminf\limits_{|x|\to \infty} (u-v)(x)\geq0$. }$$ In particular, there
is a unique solution to \eqref{3}
\end{lemma}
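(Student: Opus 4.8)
The plan is to prove the comparison principle by a maximum‑principle argument adapted to the sublinear/monotone setting, turning the difference $w=u-v$ into an $L$‑subsolution of a linear equation with nonpositive zeroth‑order coefficient, and then invoking the classical weak maximum principle. Set $w=u-v$. On the (open) set $U=\{x\in D: w(x)<0\}$ we have $u(x)<v(x)$, hence by monotonicity of $\varphi$ in the second variable $\varphi(x,u(x))\le\varphi(x,v(x))$, so the hypothesis $Lu-\varphi(\cdot,u)\le Lv-\varphi(\cdot,v)$ gives, on $U$,
\begin{equation*}
Lw=Lu-Lv\le \varphi(\cdot,u)-\varphi(\cdot,v)\le 0,
\end{equation*}
i.e. $-w$ is $L$‑subharmonic on $U$ (equivalently $w$ is $L$‑superharmonic there). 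First I would argue that $U$ is empty. Suppose not; then on each connected component $U_0$ of $U$ the function $-w$ is a nonnegative $L$‑subharmonic function, and by the boundary behaviour — either $x\to y\in\partial D$ inside $D$, where $\liminf(u-v)\ge0$ forces $\limsup(-w)\le0$, or $x$ approaches a point of $\partial D$ that lies in $D$ where $w=0$ by continuity, so again $-w\to0$ — we get $\limsup_{x\to\partial U_0}(-w)(x)\le 0$. Since $\Omega$ is Greenian (so $L$ admits a positive Green function, hence the weak maximum principle holds on bounded subdomains for $L$ with $L1\le 0$), a nonnegative $L$‑subharmonic function on the bounded domain $U_0$ with nonpositive boundary limsup must be $\le 0$ throughout $U_0$, i.e. $-w\le0$ on $U_0$; but $-w>0$ on $U_0$ by definition, a contradiction. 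Hence $U=\varnothing$ and $w=u-v\ge0$ on $D$, as claimed.

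For the unbounded‑domain footnote the same reasoning applies verbatim once one adds the extra boundary component ``at infinity'': the additional hypothesis $\liminf_{|x|\to\infty}(u-v)(x)\ge0$ ensures $\limsup_{|x|\to\infty}(-w)\le0$, so that on each component $U_0$ of $U=\{w<0\}$ (now possibly unbounded) the function $-w$ is nonnegative, $L$‑subharmonic, with nonpositive limsup along the \emph{entire} boundary of $U_0$ in $\mathbb{R}^d\cup\{\infty\}$; the maximum principle on a Greenian domain (where the constant function is a positive supersolution since $L1\le0$, and there is no bounded positive $L$‑harmonic function dominating a Green potential) again forces $-w\le 0$, a contradiction.

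Finally, the uniqueness statement for \eqref{3} is immediate: if $u_1,u_2$ both solve \eqref{3} with the same boundary data $f$, then applying the comparison with $(u,v)=(u_1,u_2)$ (here the two differential inequalities are in fact equalities, and $\liminf_{x\to y}(u_1-u_2)(x)=f(y)-f(y)=0$ along $\partial D$) gives $u_1\ge u_2$, and by symmetry $u_2\ge u_1$, so $u_1=u_2$.

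The main obstacle — and the one point that deserves care rather than a one‑line dismissal — is justifying the weak maximum principle in the precise form needed: $-w$ is only known a priori to be $L$‑subharmonic in the distributional sense on the open set $U$, which need not be regular or connected, and one must handle both the genuine boundary $\partial D$ and the ``interior'' part of $\partial U$ (where $w=0$) simultaneously, as well as, in the unbounded case, the point at infinity. This is exactly where one uses that $\Omega$ is Greenian and $L1\le0$: the constant $1$ is an $L$‑superharmonic function, so the minimum principle applies to $-w/1$, and the absence of a positive $L$‑harmonic function below the Green function (the defining property of $G_\Omega$) rules out a nontrivial nonnegative subsolution vanishing at the boundary; the component‑by‑component reduction then finishes the argument.
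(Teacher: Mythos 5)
Your argument is correct and is essentially the paper's own proof: both define the open set where $u<v$, use the monotonicity of $\varphi$ to show $L(u-v)\le 0$ there, and conclude by the minimum principle for $L$-superharmonic functions (equivalently your maximum principle for $-(u-v)$), with uniqueness for \eqref{3} obtained by applying the comparison twice. The only cosmetic quibble is that the weak maximum principle on the bounded set $U$ comes from ellipticity and $L1\le 0$, not from $\Omega$ being Greenian.
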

\vspace{0.5cm}

\begin{proof}
Let $V=\{x\in D,\,\,u(x)<v(x)\}$, $V$ is open in $D$ because $u,v$ are continuous. As in
\cite{Ghardallou} we prove that
  $$\left\{
 \begin{array}{ll}
 L(u-v){\leq}0 &\hbox{ in $V$,}\\ 
 \liminf\limits_{x\to z}(u-v)(x)\geq 0 &\hbox{on $\partial V$.}
 \end{array}
 \right.
$$
It follows that $u-v$ is a lower semi-continuous function satisfying super-mean value property. In
the sense of the classical potential theory such functions are called $L$-superharmonic and they
satisfy a minimum principle that implies
 $$  u-v\geq 0 \hbox{ in $V$},$$
and so $V$ is empty. For the details we refer the reader to the Appendix of \cite{Ghardallou},
Proposition 42, and Section 1 of \cite{Ghardallou-thesis}.

\end{proof}

Proceeding as in \cite{Ghardallou,Ghardallou-thesis}, we obtain the following statement about
regularity of solutions.
\begin{theorem}\label{regularity-of-continuous-solution-in-bounded-domain}

Suppose that the assumptions of Theorem ~\ref{resolution-in-regular-domain} are satisfied and
additionally that for every $c>0$, $\varphi(\cdot,c)\in \mathcal{L}^{\infty}_{loc}(D), $ then the
unique solution $u$ of problem \eqref{3} belongs to $\mathcal{C}^+(\overline{ D})\cap
\mathcal{C}^1(D).$ Furthermore, if $\varphi$ $\in$ $\mathcal{C}^{\alpha}_{loc}(D\times [0,\infty[)$
then $u\in$ $\mathcal{C}^{2,\alpha}_{loc}(D)\cap\mathcal{C}(\bar{D}).$

\end{theorem}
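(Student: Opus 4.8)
The plan is a standard elliptic bootstrap carried out on the potential representation \eqref{oneregul}. Write $u=H_Df-w$ with $w:=G_D(\varphi(\cdot,u))$. Since $H_Df$ is $L$-harmonic it lies in $\mathcal{C}^{\infty}(D)$, so all the interior regularity of $u$ is that of $w$, while continuity up to $\overline D$ is already guaranteed by Theorem~\ref{resolution-in-regular-domain}. The whole point is therefore to upgrade the regularity of $w$, knowing that $Lw=-\varphi(\cdot,u)$ holds in the distributional sense and that $L$ is uniformly elliptic with smooth coefficients on every relatively compact subdomain of $D$.

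First I would establish the $\mathcal{C}^1$ statement. From $0\leq u\leq\beta$ on $D$ and the monotonicity of $\varphi$ in the second variable we get $0\leq\varphi(\cdot,u(\cdot))\leq\varphi(\cdot,\beta)$ on $D$; under the additional hypothesis $\varphi(\cdot,\beta)\in\mathcal{L}^{\infty}_{loc}(D)$ this forces $\varphi(\cdot,u)\in\mathcal{L}^{\infty}_{loc}(D)\subset\mathcal{L}^{p}_{loc}(D)$ for every $p<\infty$. Applying the classical interior $L^{p}$ (Calder\'on--Zygmund) estimates for $L$ on an exhaustion of $D$ by relatively compact subdomains gives $w\in W^{2,p}_{loc}(D)$ for all $p<\infty$, and Sobolev embedding with $p>d$ yields $w\in\mathcal{C}^{1}(D)$. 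Hence $u=H_Df-w\in\mathcal{C}^{+}(\overline D)\cap\mathcal{C}^{1}(D)$.

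For the second statement, assume in addition $\varphi\in\mathcal{C}^{\alpha}_{loc}(D\times[0,\infty[)$. By the previous step $u\in\mathcal{C}^{1}(D)$, so on every relatively compact ball it is Lipschitz; consequently $x\mapsto(x,u(x))$ is Lipschitz there, and $x\mapsto\varphi(x,u(x))$, being the composition of a locally $\alpha$-H\"older function with a locally Lipschitz one, is locally $\alpha$-H\"older on $D$. Feeding this into the interior Schauder estimates for $L$ gives $w\in\mathcal{C}^{2,\alpha}_{loc}(D)$, whence $u=H_Df-w\in\mathcal{C}^{2,\alpha}_{loc}(D)\cap\mathcal{C}(\overline D)$. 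The one point that requires care — and the reason the two steps must be done in this order — is that the composition of a jointly $\alpha$-H\"older $\varphi$ with a merely continuous $u$ need not be H\"older, so the passage from a bounded right-hand side to a H\"older right-hand side has to be routed through the $W^{2,p}/\mathcal{C}^{1}$ gain first; apart from this, the argument is routine and is exactly the one used in \cite{Ghardallou,Ghardallou-thesis}. One should also note that differentiating the representation \eqref{oneregul} termwise is legitimate precisely because $Lw=-\varphi(\cdot,u)$ holds distributionally with a locally bounded (resp.\ locally H\"older) right-hand side, so interior elliptic regularity applies directly to $w$ on exhausting subdomains.
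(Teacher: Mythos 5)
Your bootstrap is correct and is essentially the argument the paper intends: the paper gives no proof here, deferring to \cite{Ghardallou,Ghardallou-thesis}, where the same decomposition $u=H_Df-G_D(\varphi(\cdot,u))$ is combined with the regularity of Green potentials of locally bounded (resp.\ locally H\"older) densities, which is exactly your interior $W^{2,p}$/Sobolev step followed by the Schauder step. Your remark on why the $\mathcal{C}^1$ gain must precede the H\"older composition argument is also the right observation; no gaps.
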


\section{One-to-one correspondence in a Greenian domain}\label{greeniandom}
Let $\Omega $ be a Greenian domain for $L$. { We would like to obtain in $\Omega $ something in the
spirit of \eqref{oneregul}.} Clearly, in this case we cannot talk about boundary values but we may
write
\begin{equation}\label{oneingreen}
h=u+G_{\Omega}(\varphi(\cdot,u))\end{equation} and ask whether it gives a one-to-one correspondence
between positive solutions $u$ and $h\in \mathcal{H}^+(\Omega )$. In general, it is not the case
and there may exist solutions such that $G_{\Omega}(\varphi(\cdot,u))$ is not finite (see \cite{HM}
for the Laplace operator $\Delta $). However, under some more hypotheses we may get
\eqref{oneingreen} for $h$ from a suitable subset of $\mathcal{H}^+(\Omega )$,  
see Theorems \ref{bigoneone}, { \ref{further} and Corollary \ref{oneonecoresp}. The statements we
obtain are new even for $\Delta $.}

First we need to improve the comparison principle:

\begin{prop}\label{comparaison-of-solution}
Let $u_1,u_2\in\mathcal{C}^+(\Omega)$, $h_1,h_2\in\mathcal{C}^+(\Omega)$ such that:
 $$ h_i=u_i+G_{\Omega}(\varphi(\cdot,u_i)),\hbox{ $1\leq i \leq 2$}.$$
where $\varphi:\Omega\times[0,+\infty[\rightarrow[0,+\infty[$ satisfies $(H_1)-(H_2)$. If $h_1-h_2$
is positive $L$-superharmonic then
$$u_1-u_2\geq 0 \hbox{  in }\Omega.$$
{In particular, for every $h\in \mathcal{H}^+(\Omega)$ there exists at most one function
$u\in\mathcal{C}^+(\Omega)$ such that $$u+G_{\Omega}\varphi(\cdot,u)=h.$$}
\end{prop}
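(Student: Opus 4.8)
The plan is to show that the continuous function $w:=(u_2-u_1)^{+}$ vanishes identically; equivalently, writing $V:=\{x\in\Omega:u_1(x)<u_2(x)\}=\{w>0\}$ (open, since $u_1,u_2$ are continuous), that $V=\emptyset$. First I record the basic identity. Since $h_i\in\mathcal C^{+}(\Omega)$, the function $G_{\Omega}(\varphi(\cdot,u_i))=h_i-u_i$ is continuous, hence finite everywhere; as $u_i$ is continuous, $\varphi(\cdot,u_i)\in\mathcal K_d^{loc}(\Omega)$ by $(H_1)$–$(H_2)$ (domination on compacta by $\varphi(\cdot,M)$ via monotonicity), so by Proposition~\ref{contpot} the potential $G_{\Omega}(\varphi(\cdot,u_i))$ is a genuine continuous Green potential with $L\,G_{\Omega}(\varphi(\cdot,u_i))=-\varphi(\cdot,u_i)$ in $\mathcal D'(\Omega)$. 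Subtracting the two relations $h_i=u_i+G_{\Omega}(\varphi(\cdot,u_i))$ yields, on $\Omega$,
$$u_1-u_2=(h_1-h_2)-G_{\Omega}(\varphi(\cdot,u_1))+G_{\Omega}(\varphi(\cdot,u_2)).$$

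Next I check that $w$ is $L$-subharmonic on all of $\Omega$. From the identity above, $Lu_i=Lh_i+\varphi(\cdot,u_i)$ in $\mathcal D'(\Omega)$, hence on $V$ we have $L(u_2-u_1)=-L(h_1-h_2)+\varphi(\cdot,u_2)-\varphi(\cdot,u_1)\geq 0$, because $h_1-h_2$ is $L$-superharmonic and $\varphi(\cdot,u_1)\leq\varphi(\cdot,u_2)$ on $V$ by $(H_2)$. Thus $w=u_2-u_1$ is $L$-subharmonic on $V=\{w>0\}$; since $w$ is continuous, nonnegative, and identically $0$ on $\Omega\setminus V$, the sub-mean value inequality holds at every point of $\Omega$ (trivially where $w=0$, harmonic measure being a positive measure), so $w$ is $L$-subharmonic on $\Omega$, much as in the proof of Lemma~\ref{comparaison-semi-elliptic}.

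Now I squeeze $w$ between $0$ and a potential. On $V$, using the displayed identity together with $h_1-h_2\geq 0$ and $G_{\Omega}(\varphi(\cdot,u_2))\geq 0$,
$$0\leq w=u_2-u_1=-(h_1-h_2)-G_{\Omega}(\varphi(\cdot,u_2))+G_{\Omega}(\varphi(\cdot,u_1))\leq G_{\Omega}(\varphi(\cdot,u_1)),$$
and on $\Omega\setminus V$ we have $w=0$, so $0\leq w\leq p:=G_{\Omega}(\varphi(\cdot,u_1))$ on $\Omega$, where $p=h_1-u_1$ is a (finite) potential. A nonnegative $L$-subharmonic function on a Greenian domain dominated by a potential must vanish: its least $L$-harmonic majorant is $\leq p$, hence is dominated by the greatest $L$-harmonic minorant of $p$, which is $0$ because $p$ is a potential; therefore $w\leq 0$, i.e. $w\equiv 0$ and $u_1\geq u_2$ on $\Omega$. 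For the underlying potential-theoretic facts (Riesz decomposition, least harmonic majorants, vanishing greatest harmonic minorant of a potential) I refer to \cite{Ghardallou,Ghardallou-thesis}. Finally, the uniqueness statement follows by taking $h_1=h_2=h\in\mathcal H^{+}(\Omega)$: then $h_1-h_2\equiv 0$ is a positive $L$-superharmonic function, so the above gives $u_1\geq u_2$, and interchanging $u_1$ and $u_2$ gives $u_2\geq u_1$, whence $u_1=u_2$.

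The step I expect to be the main obstacle is the last one: because $\Omega$ may be unbounded and its boundary irregular, one cannot invoke a boundary maximum principle directly. The device that makes it work is the displayed identity, which pins $w$ between $0$ and an honest Green potential; once that is in place, the characterization of potentials by a vanishing greatest $L$-harmonic minorant closes the argument with no boundary hypothesis whatsoever. A secondary point requiring care is the justification that $G_{\Omega}(\varphi(\cdot,u_i))$ is a true potential with $L\,G_{\Omega}(\varphi(\cdot,u_i))=-\varphi(\cdot,u_i)$ — this is where finiteness (from continuity of $h_i-u_i$) and $\varphi(\cdot,u_i)\in\mathcal K_d^{loc}(\Omega)$ are used, via Proposition~\ref{contpot}.
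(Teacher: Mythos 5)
Your argument is correct, but it follows a genuinely different route from the paper's. The paper works with the closed set $K=\{u_1\ge u_2\}$, splits $v=\varphi(\cdot,u_2)-\varphi(\cdot,u_1)$ into $v^{+}-v^{-}$, observes that $G_{\Omega}(v^{-})$ is a continuous potential which is $L$-harmonic off $K$ (since $v^{-}$ is supported in $K$ by monotonicity), and then applies the domination principle to compare it with the $L$-superharmonic function $h_1-h_2+G_{\Omega}(v^{+})$ across $\partial K$; the identity $u_1-u_2+G_{\Omega}(v^{-})=h_1-h_2+G_{\Omega}(v^{+})$ then yields $u_1\ge u_2$ everywhere. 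You instead avoid the $v^{\pm}$ decomposition and the domination principle altogether: you glue $w=(u_2-u_1)^{+}$ into a globally defined nonnegative $L$-subharmonic function (subharmonic on $V=\{u_1<u_2\}$ by monotonicity of $\varphi$ and superharmonicity of $h_1-h_2$, trivially sub-mean-valued where $w=0$), squeeze it under the finite potential $p=G_{\Omega}(\varphi(\cdot,u_1))=h_1-u_1$, and conclude $w\equiv 0$ because the least $L$-harmonic majorant of $w$ is a harmonic minorant of $p$, hence zero. The two proofs rest on closely related potential-theoretic facts (domination principle versus the Riesz-type property that a potential has vanishing greatest harmonic minorant, both of which the paper uses elsewhere, e.g. in Proposition \ref{divisionvarphi2}), and both need $LG_{\Omega}(\varphi(\cdot,u_i))=-\varphi(\cdot,u_i)$ together with finiteness coming from $G_{\Omega}(\varphi(\cdot,u_i))=h_i-u_i$. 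What your version buys is that it bypasses the boundary comparison on $\partial K$ and the support analysis of $v^{-}$, at the price of the pasting lemma for subharmonic functions and the least-harmonic-majorant construction via a regular exhaustion; what the paper's version buys is that it stays entirely within the domination principle already quoted from \cite{Ghardallou}. Your handling of the delicate points (local Kato membership of $\varphi(\cdot,u_i)$ via $(H_1)$--$(H_2)$, finiteness of the potentials, and the reduction of uniqueness to the case $h_1-h_2\equiv 0$) is sound.
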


\begin{proof}
{We proceed as in the proof and the proof of Lemma 15 in \cite{Ghardallou}} applying the domination
principle (see e.g Proposition 44 in \cite{Ghardallou} or Proposition 35 in
\cite{Ghardallou-thesis}). Let
$$K=\{ x\in\Omega, (u_1-u_2)(x)\geq 0 \}.$$ By assumptions $K$ is closed and non empty. Let $$v=\varphi(\cdot,u_2)-\varphi(\cdot,u_1).$$ Then
$$ u_1-u_2 + G_{\Omega}(v^-)=h_1-h_2+G_{\Omega}(v^+),$$ with $t^+=\max\{t,0\}$ and $t^-=\max\{-t,0\}$. 
As in \cite{Ghardallou} $$L(G_{\Omega}(v^+))=-v^+, \hbox{ in }\Omega,$$ and
$$L(G_{\Omega}(v^-))=-v^-, \hbox{ in }\Omega.$$
Therefore, $h_1-h_2+G_{\Omega}(v^+)$ is a $L$-superharmonic positive function in $\Omega$ so it is
lower semi-continuous on $\overline{\Omega-K}$. In addition, $G_{\Omega}(v^-)$ is a potential
$L$-harmonic in $\Omega \setminus K$, because $v^-$ is supported in $K$.
Furthermore, it is clear that $G_{\Omega}(v^-)$ is continuous. 
Finally, 
$$h_1-h_2+G_{\Omega}(v^+)\geq G_{\Omega}(v^-),$$ on the boundary of $K$.

We can conclude
$$h_1-h_2+G_{\Omega}(v^+)\geq G_{\Omega}(v^-),$$ holds everywhere which implies that $u_1-u_2\geq0$
in $\Omega$.

\end{proof}

We will also need the following Lemma about convergence of a sequence of solutions:
\begin{lemma}\label{convergence-bounded-solution}

Let $\Omega$ be an open subset on $\mathbb{R}^d$, $\varphi$ satisfies $(H_1)-(H_2)$ and let $(u_n)$
be a sequence of nonnegative solutions of \eqref{5} in $\Omega$ that is uniformly bounded on
compact sets and it converges pointwise to a function $u$. Then $u$ is a solution of \eqref{5} in
$\Omega$.

\end{lemma}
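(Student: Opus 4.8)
The plan is to pass to the limit in the distributional formulation of \eqref{5}. Fix a test function $\psi\in\mathcal{C}_c^\infty(\Omega)$ and let $K=\operatorname{supp}\psi$. For each $n$ we have, by definition of a solution,
\[
\int_\Omega u_n\, L^*(\psi)\,dx=\int_\Omega \varphi(\cdot,u_n)\,\psi\,dx,
\]
and the goal is to show that both sides converge to the corresponding expressions with $u_n$ replaced by $u$. Since the $u_n$ are uniformly bounded on the compact set $K$, say $0\leq u_n\leq M$ on $K$, and $u_n\to u$ pointwise, dominated convergence (the bound being the constant $M$ on $K$, integrable there) gives $\int_\Omega u_n L^*(\psi)\to \int_\Omega u\, L^*(\psi)$, because $L^*(\psi)$ is smooth with compact support in $K$. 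For the right-hand side I would first note that, by $(H_2)$, $t\mapsto\varphi(x,t)$ is continuous, so $\varphi(x,u_n(x))\to\varphi(x,u(x))$ pointwise on $\Omega$; and by monotonicity of $\varphi$ in the second variable together with $0\leq u_n\leq M$ on $K$, we have $0\leq \varphi(\cdot,u_n)\leq \varphi(\cdot,M)$ on $K$. By $(H_1)$, $\varphi(\cdot,M)\in\mathcal{K}_d^{loc}(\Omega)$, hence (by the Proposition following the definition of the Kato class, applied to the bounded domain $K^\circ$ or rather to any bounded open set containing $K$) $\varphi(\cdot,M)\in\mathcal{L}^1(K)$, so it is an integrable dominating function on $K$. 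Another application of dominated convergence yields $\int_\Omega\varphi(\cdot,u_n)\psi\to\int_\Omega\varphi(\cdot,u)\psi$.

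Combining the two limits gives $\int_\Omega u\,L^*(\psi)=\int_\Omega\varphi(\cdot,u)\psi$ for every $\psi\in\mathcal{C}_c^\infty(\Omega)$. It remains to check that $u$ is admissible as a solution, i.e.\ that $u\in\mathcal{C}^+(\Omega)$ and $\varphi(\cdot,u)\in\mathcal{L}^1_{loc}(\Omega)$. Nonnegativity of $u$ is immediate from $u_n\geq0$. Local integrability of $\varphi(\cdot,u)$ follows from the bound $0\leq\varphi(\cdot,u)\leq\varphi(\cdot,M)$ on each compact $K$, which we just argued is integrable there (and by $(H_1)$ applied to the relevant constant on each compact set). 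The one genuine point requiring care is continuity of the limit $u$: pointwise limits of continuous functions need not be continuous, so this does not come for free from the hypotheses as literally stated.

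The main obstacle, therefore, is establishing that $u$ is continuous. The natural route is a local regularity argument: on a fixed regular bounded domain $D$ with $\overline D\subset\Omega$, write the representation \eqref{oneregul} for each $u_n$,
\[
u_n(x)=H_D(u_n|_{\partial D})(x)-\int_D G_D(x,y)\,\varphi(y,u_n(y))\,dy,
\]
valid by Theorem \ref{resolution-in-regular-domain} since $u_n$ restricted to $D$ solves \eqref{3} with boundary data $u_n|_{\partial D}$. Using the uniform bound and dominated convergence in the Green potential term (dominated by $G_D(\varphi(\cdot,M))\in\mathcal{C}_0(D)$ via Proposition \ref{kato-class-in-bounded-domain}), together with uniform convergence or at least control of the harmonic parts $H_D(u_n|_{\partial D})$ on compact subsets of $D$ (interior estimates for $L$-harmonic functions bounded by $M$), one deduces that the convergence $u_n\to u$ is in fact locally uniform on $D$, whence $u$ is continuous on $D$; since $D$ was arbitrary, $u\in\mathcal{C}(\Omega)$. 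An alternative, cleaner if available, is simply to cite the interior equicontinuity of solutions from the regularity statement Theorem \ref{regularity-of-continuous-solution-in-bounded-domain} combined with the a priori bound, obtaining local uniform convergence directly. Either way, once continuity of $u$ is in hand, the distributional identity derived above shows $u$ solves \eqref{5} in $\Omega$, completing the proof.
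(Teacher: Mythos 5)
Your first half—passing to the limit directly in the distributional identity $\int_\Omega u_n L^*(\psi)=\int_\Omega \varphi(\cdot,u_n)\psi$ using the uniform bound $0\leq u_n\leq M$ on $\operatorname{supp}\psi$, the monotone domination $\varphi(\cdot,u_n)\leq\varphi(\cdot,M)$, and local integrability of $\varphi(\cdot,M)$ coming from $(H_1)$—is correct and is actually a more direct route to the equation than the paper's, which instead derives the PDE from the representation $h_n=u_n+G_D(\varphi(\cdot,u_n))$ on a regular bounded $D$ and the identity $L G_D\psi=-\psi$. You also correctly isolate the genuine issue, continuity of $u$, and your route to it (the representation via $H_D$ and $G_D$) is essentially the paper's.

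However, the continuity step as you wrote it overreaches: you claim that dominated convergence of the potentials plus interior estimates for the harmonic parts yields \emph{locally uniform} convergence of $u_n$. Dominated convergence only gives pointwise convergence of $G_D(\varphi(\cdot,u_n))(x)$ for each fixed $x$; to upgrade $u_n\to u$ to local uniform convergence you would additionally need equicontinuity of the family $G_D(\varphi(\cdot,u_n))$ (obtainable from a Kato-class splitting of the kernel near the diagonal, but you do not supply it). The point is that this upgrade is unnecessary, and the ingredients you already have close the argument exactly as the paper does: the harmonic parts $h_n=H_D(u_n|_{\partial D})=u_n+G_D(\varphi(\cdot,u_n))$ are uniformly bounded $L$-harmonic functions converging pointwise, hence (normal families/interior estimates) their limit $h$ is $L$-harmonic, in particular continuous; and since $0\leq\varphi(\cdot,u)\leq\varphi(\cdot,M)$ with $\varphi(\cdot,M)\in\mathcal{K}_d(D)$, Proposition \ref{kato-class-in-bounded-domain} gives $G_D(\varphi(\cdot,u))\in\mathcal{C}_0(D)$, so $u=h-G_D(\varphi(\cdot,u))$ is continuous on $D$, and $D$ was arbitrary. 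Finally, your proposed ``cleaner alternative'' via Theorem \ref{regularity-of-continuous-solution-in-bounded-domain} is not available here: that statement requires $\varphi(\cdot,c)\in\mathcal{L}^{\infty}_{loc}$, which is not part of $(H_1)$--$(H_2)$, and it concerns regularity of a single solution rather than equicontinuity of the sequence.
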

\begin{proof}

Let $D$ a bounded regular domain such that $\bar{D} \subset \Omega$ and let $h_n$ be the
$L$-harmonic function in $D$ defined by $h_n=u_n+G_D(\varphi(\cdot,u_n))$.
 Since the functions $(u_n)$ restricted to $D$ are uniformly bounded on $D$, say by a constant $C_D$, we conclude by dominated convergence theorem that $G_D(\varphi(\cdot,u_n))$ converges to $G_D(\varphi(\cdot,u))$. This implies that the sequence $(h_n)$ is bounded above by $ C_D+G_D(\varphi(\cdot,C_D)) $ and it converges to a $L$-harmonic function $h$ such that $h=u+G_D(\varphi(\cdot,u))$.
 Hence, $u$ is continuous and satisfies \eqref{5} in $D$ and so in $\Omega$.

\end{proof}

{Finally, let  
$f\in\mathcal{C}^+(\Omega)$ and let $D$ be a regular bounded domain satisfying $\overline{D}\subset
\Omega$. For $x\in D$, we define
$$U_D^{\varphi}f(x) $$ to be the unique solution to the problem \eqref{3} and
$$U_D^{\varphi}f(x)=f(x), \hbox{ if $x\not\in D$}.$$ Then we have}

\begin{lemma}\label{properties-of-UD}\ \\
\begin{enumerate}[label=\Alph*]
  \item[$(a)$] Let $f,g\in \mathcal{C}^+(\Omega)$. Then:
 $U_{D }^{\varphi}$ is monotone nondecreasing i.e. : 
                 $$ U_{D }^{\varphi}f\leq U_{D }^{\varphi}g, \hbox{     if     } f\leq g \hbox{ in $\Omega$}.$$

  \item[$(b)$] Let $u\in\mathcal{C}^+(\Omega)$ a $L$-supersolution and
      $v\in\mathcal{C}^+(\Omega)$ a $L$-subsolution of \eqref{5} in $\Omega$. Suppose further
      that there is a subdomain $D'$ regular bounded such that $D'\subset D $. Then we have:
      \begin{itemize}
        \item[]($b_1$) $$   U_{D}^{\varphi}u\leq u \ \hbox{     and     }\ U_{D}^{\varphi}v\geq
            v.$$

        \item[]($b_2$) $$   U_{D '}^{\varphi}u \geq U_{D}^{\varphi}u\ \hbox{ and }\ U_{D
            '}^{\varphi}v\leq U_{D}^{\varphi}v.$$
      \end{itemize}

\end{enumerate}

\end{lemma}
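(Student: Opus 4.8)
The plan is to prove the four inequalities in Lemma~\ref{properties-of-UD} by systematic appeals to the comparison principle of Lemma~\ref{comparaison-semi-elliptic}, applied on the relevant bounded regular domains, together with the monotonicity of $\varphi$ in its second variable.

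\medskip
\textbf{Part (a): monotonicity of $U_D^\varphi$.} Suppose $f\le g$ in $\Omega$. Outside $D$ there is nothing to prove, since $U_D^\varphi f=f\le g=U_D^\varphi g$ there. Inside $D$, set $u=U_D^\varphi f$ and $v=U_D^\varphi g$. Both solve $Lu-\varphi(\cdot,u)=0=Lv-\varphi(\cdot,v)$ in $D$ with boundary data $f$ and $g$ respectively, so $Lu-\varphi(\cdot,u)=Lv-\varphi(\cdot,v)$ and, by continuity up to $\partial D$, $\liminf_{x\to y,\,y\in\partial D}(v-u)(x)=g(y)-f(y)\ge 0$. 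Lemma~\ref{comparaison-semi-elliptic} (with the roles of $u$ and $v$ interchanged) gives $v-u\ge 0$ in $D$, i.e.\ $U_D^\varphi f\le U_D^\varphi g$.

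\medskip
\textbf{Part ($b_1$).} Let $u$ be a supersolution: $Lu-\varphi(\cdot,u)\le 0$ in $\Omega$. Put $w=U_D^\varphi u$, which equals $u$ off $D$ and solves $Lw-\varphi(\cdot,w)=0$ in $D$ with $w=u$ on $\partial D$. On $D$ we then have $Lu-\varphi(\cdot,u)\le 0=Lw-\varphi(\cdot,w)$ and $\liminf_{x\to y,\,y\in\partial D}(u-w)(x)=0\ge 0$, so Lemma~\ref{comparaison-semi-elliptic} yields $u\ge w$ in $D$; off $D$ they are equal, so $U_D^\varphi u\le u$ on $\Omega$. The subsolution case $U_D^\varphi v\ge v$ is entirely symmetric: with $w=U_D^\varphi v$ one has $Lw-\varphi(\cdot,w)=0\le Lv-\varphi(\cdot,v)$ in $D$ and $w=v$ on $\partial D$, whence $w\ge v$ by the comparison lemma.

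\medskip
\textbf{Part ($b_2$).} Here $D'\subset D$ are regular bounded subdomains, $u$ is a supersolution. Write $w=U_D^\varphi u$ and $w'=U_{D'}^\varphi u$. By ($b_1$) applied to $D$, we have $w\le u$ on $\Omega$; in particular $w\le u$ on $\overline{D'}$. Now both $w$ and $w'$ solve the equation $L\cdot-\varphi(\cdot,\cdot)=0$ inside $D'$ (for $w$ this holds because $D'\subset D$ and $w$ solves it throughout $D$), and on $\partial D'$ we have $w\le u=w'$. Applying Lemma~\ref{comparaison-semi-elliptic} on $D'$ to the pair $w,w'$ (with $Lw-\varphi(\cdot,w)=0=Lw'-\varphi(\cdot,w')$ and $\liminf_{x\to y,\,y\in\partial D'}(w'-w)(x)\ge 0$) gives $w'\ge w$ on $D'$. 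Off $D'$, both $w'$ and $u$ agree and $w\le u$ by ($b_1$), so $w'=u\ge w$ there too; hence $U_{D'}^\varphi u\ge U_D^\varphi u$ on all of $\Omega$. The subsolution statement $U_{D'}^\varphi v\le U_D^\varphi v$ follows by the same argument with all inequalities reversed: ($b_1$) gives $U_D^\varphi v\ge v$, hence $U_D^\varphi v\ge v=U_{D'}^\varphi v$ on $\partial D'$, and comparison on $D'$ propagates this inward.

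\medskip
The only mild subtlety—not a genuine obstacle—is making sure Lemma~\ref{comparaison-semi-elliptic} applies: one needs $w,w',u,v$ continuous with $Lw,Lw',Lu,Lv\in\mathcal{L}^1_{\mathrm{loc}}$ of the relevant domain, which is guaranteed because solutions of \eqref{3} lie in $\mathcal{C}(\overline D)$ with $L\cdot=\varphi(\cdot,\cdot)$ locally integrable by $(H_1)$, and because a $U_D^\varphi$-image is continuous across $\partial D$ by construction (it matches the continuous boundary data). Everything else is a bookkeeping of which function plays the sub- and which the super-role; the heart of each case is a single invocation of the boundary comparison principle, so I would present the four items compactly as above.
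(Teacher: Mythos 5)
Your proof is correct, and it follows the standard route: each of the four inequalities is a single application of the boundary comparison principle (Lemma~\ref{comparaison-semi-elliptic}), using that $U_D^{\varphi}f$ solves \eqref{3} in $D$ with boundary data $f$ and agrees with $f$ off $D$. This is essentially the same argument as the one the paper relies on (it omits the proof and refers to \cite{Mabrouk}, where the lemma is proved by exactly these comparison arguments), so nothing further is needed beyond perhaps noting that for a general continuous super/subsolution $Lu$ is a priori only a measure, which does not affect the comparison lemma's minimum-principle proof.
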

The proof of Lemma \ref{properties-of-UD} is the same as in \cite{Mabrouk}.

\medskip
Now, let $(D_n)$ be a sequence of bounded regular domains such that for every $ n\in \mathbb{N},$
$\overline{D_n}\subset D_{n+1} \subset \Omega$ and $\displaystyle\bigcup _{n=1}^{\infty }D_n=\Omega
$. Such a sequence will be called {\it a regular exhaustion} of $\Omega $ and it is needed to
construct solutions to \eqref{5} in $\Omega $. The following proposition summarizes the basic
properties of this construction.

 \begin{prop}\label{existence of solution}
Let $s\in C^+(\Omega )$ be a $L$-superharmonic function. Then:
\begin{enumerate}[label=\Alph*]
  \item[$(i)$] The sequence $(U_{D_n}^{\varphi}s)$ is decreasing to a solution
      $u\in\mathcal{C}^+(\Omega)$ of \eqref{5} satisfying $u\leq s$.\footnote{Note here that $u$
      can be zero.}
\item[$(ii)$] Every solution $w\in C^+(\Omega )$ of \eqref{5} which is majorized  by $s$
    satisfies

    \begin{equation}\label{expression-solution}
    w + G_{\Omega}\varphi(\cdot,w)=h_w, \hbox{ in $\Omega$,}
\end{equation}
where $h_w$ is an $L$-harmonic minorant of $s$. Additionally, $w\leq u$ so $u$ is the maximal
one.
\item[$(iii)$] Let $h\in {\mathcal H}^+(\Omega )$ and $w\in C^+(\Omega )$ be such that
    \eqref{expression-solution} holds. Then $h_w$ is the smallest $L$-harmonic majorant of $w$
    and $w$ is the maximal solution to \eqref{5} which is majorized by $h_w$.
\end{enumerate}
\end{prop}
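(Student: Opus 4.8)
The plan is to establish the three items in turn, relying on Lemma~\ref{properties-of-UD}, the comparison Proposition~\ref{comparaison-of-solution}, the convergence Lemma~\ref{convergence-bounded-solution}, Theorem~\ref{resolution-in-regular-domain}, and the standard fact that the Green functions $G_{D_n}$ increase to $G_\Omega$ along a regular exhaustion.

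\emph{Item $(i)$.} Since $Ls\le 0\le\varphi(\cdot,s)$, the function $s$ is an $L$-supersolution of \eqref{5}, and being continuous it is locally bounded. By Lemma~\ref{properties-of-UD}$(b_1)$ we get $U_{D_n}^{\varphi}s\le s$, and by $(b_2)$ (with $D'=D_n\subset D_{n+1}=D$) the sequence $\bigl(U_{D_n}^{\varphi}s\bigr)$ is nonincreasing; it is nonnegative and continuous on $\Omega$ by Theorem~\ref{resolution-in-regular-domain}. Hence it decreases pointwise to some $u$ with $0\le u\le s$. Being dominated by $s$, the sequence is uniformly bounded on compact subsets of $\Omega$, so Lemma~\ref{convergence-bounded-solution} yields that $u\in\mathcal C^+(\Omega)$ solves \eqref{5}.

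\emph{Item $(ii)$.} Let $w\in\mathcal C^+(\Omega)$ solve \eqref{5} with $w\le s$. Fix $n$. By uniqueness in Lemma~\ref{comparaison-semi-elliptic}, $w$ restricted to $D_n$ is the unique solution of \eqref{3} with boundary data $w|_{\partial D_n}$; thus $w=U_{D_n}^{\varphi}w$ on $\Omega$ and, by Theorem~\ref{resolution-in-regular-domain},
\[
w(x)+\int_{D_n}G_{D_n}(x,y)\varphi\bigl(y,w(y)\bigr)\,dy=H_{D_n}\bigl(w|_{\partial D_n}\bigr)(x)=:h_n(x),\qquad x\in D_n .
\]
Since $w\le s$ with $s$ $L$-superharmonic, $h_n=H_{D_n}(w|_{\partial D_n})\le H_{D_n}(s|_{\partial D_n})\le s$ on $D_n$. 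Letting $n\to\infty$, the increasing convergence $G_{D_n}\nearrow G_\Omega$ together with monotone convergence gives $\int_{D_n}G_{D_n}(\cdot,y)\varphi(y,w(y))\,dy\nearrow G_\Omega(\varphi(\cdot,w))$, hence $h_n\nearrow h_w:=w+G_\Omega(\varphi(\cdot,w))$. As an increasing limit of $L$-harmonic functions bounded above by $s<\infty$, $h_w$ is $L$-harmonic, and $0\le w\le h_w\le s$, which is \eqref{expression-solution}. Finally, from $w=U_{D_n}^{\varphi}w\le U_{D_n}^{\varphi}s$ (monotonicity, Lemma~\ref{properties-of-UD}$(a)$) and item $(i)$ we obtain $w\le u$, so $u$ is maximal.

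\emph{Item $(iii)$.} I read \eqref{expression-solution} here with $h$ playing the role of the right-hand side, i.e. $w+G_\Omega(\varphi(\cdot,w))=h$ with $h\in\mathcal H^+(\Omega)$. Since $\varphi\ge 0$, $h\ge w$, so $h$ is an $L$-harmonic majorant of $w$. If $h'\in\mathcal H^+(\Omega)$ is another majorant of $w$, then $h'-w\ge 0$ and $L(h'-w)=-\varphi(\cdot,w)$, so by the Riesz decomposition (equivalently the domination principle, cf. Proposition~44 in \cite{Ghardallou}) the potential $G_\Omega(\varphi(\cdot,w))$ is dominated by the nonnegative $L$-superharmonic function $h'-w$; hence $h=w+G_\Omega(\varphi(\cdot,w))\le h'$, and $h$ is the smallest $L$-harmonic majorant of $w$. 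For the maximality of $w$ among solutions majorized by $h$: the function $h$ is positive, continuous and $L$-superharmonic, so applying $(i)$--$(ii)$ with $s=h$, any solution $w'\le h$ of \eqref{5} satisfies $w'+G_\Omega(\varphi(\cdot,w'))=h_{w'}$ with $h_{w'}\le h$ and $h_{w'}$ $L$-harmonic; then $h-h_{w'}\ge 0$ is $L$-superharmonic, and Proposition~\ref{comparaison-of-solution}, applied with $(u_1,h_1)=(w,h)$ and $(u_2,h_2)=(w',h_{w'})$, gives $w\ge w'$.

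\emph{Main obstacle.} The only genuinely non-formal point is in item $(ii)$: one must rule out $h_w\equiv+\infty$ for the increasing sequence $(h_n)$ of $L$-harmonic functions. This is exactly where the hypothesis $w\le s$ with $s$ $L$-superharmonic is used, via $H_{D_n}(w|_{\partial D_n})\le H_{D_n}(s|_{\partial D_n})\le s$, and it has to be combined with $G_{D_n}\nearrow G_\Omega$ and monotone convergence to pass from the $D_n$-level identities to the global representation; everything else is a bookkeeping assembly of the lemmas already at hand.
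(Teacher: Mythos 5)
Your argument is correct and follows essentially the same route as the paper: exhaustion by the $D_n$, the identities $w=U_{D_n}^{\varphi}w$ and $w+G_{D_n}\varphi(\cdot,w)=H_{D_n}w$, the bound $H_{D_n}w\le H_{D_n}s\le s$, monotone convergence, and Proposition~\ref{comparaison-of-solution} for maximality. The only (harmless) deviation is in item $(iii)$, where you obtain minimality of $h_w$ via the Riesz decomposition/domination principle applied to $h'-w$, whereas the paper simply notes $H_{D_n}w\le h'$ and lets $n\to\infty$; both are standard and equally valid here.
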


\begin{proof}\ \\
\begin{enumerate}[label=\Alph*]
  \item[$(i)$] In view of  Lemma \ref{properties-of-UD}: $$0 \leq U_{D_{n+1}}^{\varphi}s \leq
      U_{D_{n}}^{\varphi}s \leq s.  $$
So the limit $u=\lim\limits_{n\to +\infty} U_{D_n}^{\varphi}s$ exists by Lemma
      \ref{convergence-bounded-solution} and it gives a solution of \eqref{5} in $\Omega$.

 \item[$(ii)$] Let $w$ be a positive continuous solution of \eqref{5} in $\Omega$ bounded by $s$.
     Then by Lemma \ref{properties-of-UD} and Theorem \ref{resolution-in-regular-domain}
$$w = U_{D_n}^{\varphi}w \leq U_{D_n}^{\varphi}s .$$ When $n$ tends to $\infty$, we get $w\leq
u$. Also, by Theorem \ref{resolution-in-regular-domain}, $$ w + G_{D_n} \varphi(\cdot,w)=
H_{D_n}w, \hbox{ in $D_n$}.$$ Further, by the maximum principle
 $$0\leq H_{D_n}w \leq
H_{D_n}s \leq s, \hbox{ in $D_n$}.$$ Hence, the sequence $(H_{D_n}w)$ is increasing to a
$L$-harmonic function $h_w\leq s$ and by the monotone convergence theorem, $$ w + G_{\Omega}
\varphi(\cdot,w)= h_w, \hbox{ in $\Omega$}.$$

\item[$(iii)$] Let $h$ be an $L$-harmonic majorant of $w$. Then $H_{D_n}w \leq h$. When $n$ tends
    to infinity, we deduce $h_w \leq h$. If $w_1$ is a solution to \eqref{5} majorized by $h_w$
    and so by what has been said above
$$ w _1 +G_{\Omega} \varphi(\cdot,w_1)= h_{w_1}\leq h_w.$$ Hence by Proposition
\ref{comparaison-of-solution}, $w_1\leq w$.
\end{enumerate}

\end{proof}

Now we are ready to state a theorem about one-to-one correspondence between positive $L$-harmonic
functions and positive continuous solutions of \eqref{5}. Some further assumptions are needed to
guarantee that
$$u=\lim _{n\to \infty }U^{\varphi }_{D_n}s$$ constructed in Proposition \ref{existence of solution} is not trivial.
\begin{theorem}\label{bigoneone}
Let $s \in C^+(\Omega )$ be an $L$-superharmonic function. Assume that
$G_{\Omega}(\varphi(\cdot,s))$ is finite at least on one point.\footnote{By Proposition
\ref{contpot}, for every $x\in \Omega $, $G_{\Omega}(x,y)\varphi(y,s (y))$ is integrable as a
function of $y$.} Then
\begin{equation}\label{oneone2}
h=u+G_{\Omega}(\varphi(\cdot,u)) \hbox{ in }\Omega.
\end{equation}
gives one-to-one correspondence between $h \in \mathcal{H}^+(\Omega )$ bounded by $s $ and positive
continuous solutions of \eqref{5} bounded by $s $.
\end{theorem}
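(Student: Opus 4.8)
The plan is to establish the two directions of the correspondence separately, the injectivity being essentially already in hand and the surjectivity requiring the new hypothesis on $G_{\Omega}(\varphi(\cdot,s))$ to rule out triviality.

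First I would set up the correspondence map. Given a positive continuous solution $w$ of \eqref{5} bounded by $s$, Proposition \ref{existence of solution}(ii) already produces the identity $w+G_{\Omega}(\varphi(\cdot,w))=h_w$ with $h_w\in\mathcal{H}^+(\Omega)$ and $h_w\le s$, so $w\mapsto h_w$ is a well-defined map into the set of $L$-harmonic functions bounded by $s$. Injectivity of this map is precisely Proposition \ref{comparaison-of-solution}: if $h_{w_1}=h_{w_2}$ then $h_{w_1}-h_{w_2}=0$ is (trivially) positive $L$-superharmonic, hence $w_1\le w_2$ and, symmetrically, $w_2\le w_1$. So the only real content is surjectivity: given $h\in\mathcal{H}^+(\Omega)$ with $0\le h\le s$, one must produce a positive continuous solution $w\le s$ with $w+G_{\Omega}(\varphi(\cdot,w))=h$.

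For surjectivity I would run the exhaustion construction of Proposition \ref{existence of solution} but starting from $h$ rather than $s$: set $w_n=U^{\varphi}_{D_n}h$. Since $h$ is $L$-harmonic, it is in particular an $L$-supersolution, so by Lemma \ref{properties-of-UD}$(b_1)$ and $(b_2)$ the sequence $(w_n)$ is decreasing, and it is bounded below by $0$ and above by $h\le s$; by Lemma \ref{convergence-bounded-solution} the limit $w$ is a continuous solution of \eqref{5} with $w\le h\le s$. Applying Theorem \ref{resolution-in-regular-domain} on each $D_n$ gives $w_n+G_{D_n}(\varphi(\cdot,w_n))=H_{D_n}h=h$ on $D_n$ (using that $h$ is $L$-harmonic, so its Dirichlet solution on $D_n$ is $h$ itself). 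Now pass to the limit: $w_n\downarrow w$ and $\varphi(\cdot,w_n)\downarrow\varphi(\cdot,w)$ by $(H_2)$, while $G_{D_n}\uparrow G_{\Omega}$; I would invoke monotone convergence (the increments in the two directions are both monotone so there is no cancellation problem) to get $w+G_{\Omega}(\varphi(\cdot,w))=h$ on $\Omega$. This already shows $h$ is hit, provided $w$ is a legitimate element of the domain of the map — but here is where the finiteness hypothesis on $G_{\Omega}(\varphi(\cdot,s))$ enters: it guarantees $G_{\Omega}(\varphi(\cdot,w))\le G_{\Omega}(\varphi(\cdot,s))<\infty$ (at one point, hence continuous everywhere by Proposition \ref{contpot}), so the identity is a genuine decomposition and not a vacuous $\infty=\infty$; and it forces $w=h-G_{\Omega}(\varphi(\cdot,w))$ to be exactly the solution whose minimal harmonic majorant is $h$ by Proposition \ref{existence of solution}(iii).

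The step I expect to be the main obstacle is justifying the passage to the limit in $w_n+G_{D_n}(\varphi(\cdot,w_n))=h$ cleanly, i.e. confronting the two competing monotonicities ($w_n$ decreasing but $G_{D_n}$ increasing to $G_{\Omega}$). The clean way around it is to observe that $G_{D_n}(\varphi(\cdot,w_n))\le G_{D_n}(\varphi(\cdot,h))\le G_{\Omega}(\varphi(\cdot,s))$, which is finite by hypothesis, so dominated convergence applies to $G_{D_n}(\varphi(\cdot,w_n))\to G_{\Omega}(\varphi(\cdot,w))$ once one checks $G_{D_n}(x,y)\varphi(y,w_n(y))\to G_{\Omega}(x,y)\varphi(y,w(y))$ pointwise in $y$ for fixed $x$ — the Green functions converge monotonically and $\varphi(\cdot,w_n)\to\varphi(\cdot,w)$ by continuity of $\varphi$ in the second variable together with $w_n\to w$. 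A secondary point to be careful about is checking that the limit $w$ produced this way coincides, via Proposition \ref{existence of solution}(iii), with the unique solution having $h$ as minimal harmonic majorant, so that the map $w\mapsto h_w$ and the map $h\mapsto w$ are genuinely inverse to each other rather than merely one being a one-sided inverse; this follows because $w\le h$, $h$ is $L$-harmonic, and the decomposition $w+G_{\Omega}(\varphi(\cdot,w))=h$ identifies $h$ as $h_w$ by the uniqueness in part (ii).
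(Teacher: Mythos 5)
Your proposal is correct and follows essentially the same route as the paper: the forward map via the exhaustion identity $u+G_{D_n}(\varphi(\cdot,u))=H_{D_n}u$ and monotone convergence, injectivity from Proposition \ref{comparaison-of-solution}, and surjectivity by taking $u_n=U^{\varphi}_{D_n}h$ decreasing and passing to the limit in $u_n+G_{D_n}(\varphi(\cdot,u_n))=h$ by dominated convergence with the dominating function $G_{\Omega}(x,y)\varphi(y,s(y))$, integrable by the hypothesis and Proposition \ref{contpot}. Your first suggestion of monotone convergence at that step would indeed not work (the integrands are not monotone since $G_{D_n}$ increases while $\varphi(\cdot,u_n)$ decreases), but you correctly identified this and replaced it with the dominated-convergence argument, which is exactly what the paper does.
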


\begin{proof}

 Let $u\in C^+(\Omega )$ be a solution to \eqref{5} in $\Omega$ bounded by $s$. We denote $h_n=H_{D_n}u$. By Theorem \ref{resolution-in-regular-domain}
\begin{equation}\label{oneone4}
 H_{D_n}u=h_n=u+G_{D_n}(\varphi(\cdot,u)), \hbox{ in }D_n.\end{equation}

This implies that for every $n\in\mathbb{N}$,
\begin{equation}\label{oneone1}
h_{n+1}\geq h_n \hbox{  in }D_n.
\end{equation}
Indeed, $h_{n+1}\geq u=h_n \hbox{ on }\partial D_n$, hence \eqref{oneone1} follows by the maximum
principle.

Also,
$$ h_n(x)= u(x)\leq s(x) , \quad x\in \partial D$$
and so $h _n\leq s$ in $D_n$.

We deduce that $(h_n)$ converges to a $L$-harmonic function $h\leq s$. Moreover, by the monotone
convergence theorem applied to \eqref{oneone4}, $h$ satisfies
$$ h=u+G_{\Omega}(\varphi(\cdot,u)) \hbox{ in }\Omega.$$
Finally, Proposition \ref{comparaison-of-solution} implies that different solutions give rise to
different $L$-harmonic functions in \eqref{oneingreen}

Let now $h\leq s$ be a positive $L$-harmonic function in $\Omega$. By Proposition \ref{existence of
solution}, $(u_n)=(U_{D_n}^{\varphi}h)$ is decreasing to a solution
      $u\in\mathcal{C}^+(\Omega)$ of \eqref{5} satisfying $u\leq h$.

In addition \begin{equation}\label{corespn} h=u_n+G_{D_n}(\varphi(\cdot,u_n)), \hbox{ in }D_n.
\end{equation} and $$  0\leq G_{D_n}(x,y)\varphi(y,u_n(y))\leq G_{\Omega}(x,y)\varphi(y,s(y)).$$
But the right part is integrable in $\Omega$ as a function of $y$, so by dominated convergence
theorem
$$u+G_{\Omega}(\varphi(\cdot,u))=h, \hbox{  in }\Omega,$$  which implies that $u$ is not trivial, if
$h$ is not trivial too. Also by ($H_1$) and Proposition \ref{contpot}, $u$ is continuous in
$\Omega$.

\end{proof}



Notice that domination by $s$ is essential in the above argument. Given a $L$-harmonic function $h$
in $\Omega$, we can establish corresponding solutions in bounded subdomains $D_n$ but passing to
the limit we may end up with a trivial one unless \eqref{corespn} is preserved. The latter is
guaranteed by the fact that $G_{\Omega}(\varphi(\cdot,s))$ is finite. Finally, for bounded harmonic
functions and bounded solutions we have the following corollary.

\begin{corollary}\label{oneonecoresp}

Assume that $(H_4)$ is satisfied. Then
\begin{equation}\label{oneone}
h=u+G_{\Omega}(\varphi(\cdot,u)), \hbox{ in }\Omega.
\end{equation}
gives one-to-one correspondence between bounded functions in $\mathcal{H}^+(\Omega )$ and positive
continuous bounded solutions of \eqref{5}.

\end{corollary}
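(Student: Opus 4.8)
The plan is to derive Corollary~\ref{oneonecoresp} from Theorem~\ref{bigoneone} by choosing a suitable $L$-superharmonic $s$ and checking that ``bounded'' on both sides matches the statement of the theorem. Observe that $(H_4)$ gives us, for each constant $c\geq 0$, that $G_\Omega(\varphi(\cdot,c))$ is finite at one point, hence by Proposition~\ref{contpot} it is continuous on $\Omega$. Since $L1\leq 0$ in the standing hypotheses but here we have the stronger $L1=0$ assumption available in Section~\ref{thinn} only—note however Corollary~\ref{oneonecoresp} sits in Section~\ref{greeniandom}, where merely $L1\leq 0$ holds, so the constant function $1$ is $L$-superharmonic. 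The first step is therefore: fix a bounded positive continuous solution $u$ of~\eqref{5}, say $0\le u\le M$. Then $s:=M\cdot 1$ (or any constant $\ge M$) is a positive continuous $L$-superharmonic function, and $\varphi(\cdot,s)=\varphi(\cdot,M)$, whose Green potential is finite at a point by $(H_4)$. Thus the hypothesis of Theorem~\ref{bigoneone} is met with this $s$, and $u\le s$, so $u=h_u - G_\Omega(\varphi(\cdot,u))$ with $h_u\in\mathcal H^+(\Omega)$, $h_u\le s = M$, i.e.\ $h_u$ is bounded; injectivity of $u\mapsto h_u$ is immediate from Proposition~\ref{comparaison-of-solution}.

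For the reverse direction, the second step is: take $h\in\mathcal H^+(\Omega)$ bounded, $0\le h\le M$. Again put $s:=M\cdot 1$, an $L$-superharmonic function dominating $h$, with $G_\Omega(\varphi(\cdot,s))=G_\Omega(\varphi(\cdot,M))$ finite at a point by $(H_4)$. Theorem~\ref{bigoneone} then furnishes a unique positive continuous solution $u\le h\le M$ of~\eqref{5} with $h=u+G_\Omega(\varphi(\cdot,u))$; in particular $u$ is bounded. So the correspondence $h\mapsto u$ lands in bounded positive continuous solutions, and it is the two-sided inverse of $u\mapsto h_u$ by the uniqueness parts of Theorem~\ref{bigoneone} and Proposition~\ref{comparaison-of-solution}. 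One should also record that $u\not\equiv 0$ whenever $h\not\equiv 0$, which is part of Theorem~\ref{bigoneone}, and conversely if $u\equiv 0$ then $h=G_\Omega(\varphi(\cdot,0))=G_\Omega(0)=0$ using $(H_3)$.

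The main subtlety—and the only place where care is genuinely needed—is that Theorem~\ref{bigoneone} produces, for each admissible $s$, a correspondence between $h\le s$ and solutions $u\le s$, whereas the Corollary wants a single correspondence between \emph{all} bounded $h$ and \emph{all} bounded solutions, independent of a choice of $s$. I would resolve this by a consistency remark: if $M\le M'$ then the correspondence attached to $s=M'$ restricts to the one attached to $s=M$, because both are characterized by the same functional identity $h=u+G_\Omega(\varphi(\cdot,u))$ together with the uniqueness statements, and the solution $u$ associated to a given $h\le M$ automatically satisfies $u\le h\le M$. Hence the correspondences for varying $M$ glue to a single bijection between $\{h\in\mathcal H^+(\Omega):h\text{ bounded}\}$ and $\{$bounded positive continuous solutions of~\eqref{5}$\}$, which is exactly~\eqref{oneone}. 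The rest is bookkeeping: continuity of $u$ follows from $(H_1)$ and Proposition~\ref{contpot} as noted in the proof of Theorem~\ref{bigoneone}, and boundedness of $h_u$ from $h_u\le s$.
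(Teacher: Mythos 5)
Your proposal is correct and matches the paper's intended argument: Corollary~\ref{oneonecoresp} is obtained from Theorem~\ref{bigoneone} by taking $s$ to be a constant $M$ (which is $L$-superharmonic since $L1\leq 0$), with $(H_4)$ supplying the finiteness of $G_{\Omega}(\varphi(\cdot,M))$ at a point. Your extra remark on gluing the correspondences over varying $M$ via the identity $h=u+G_{\Omega}(\varphi(\cdot,u))$ and Proposition~\ref{comparaison-of-solution} is a careful touch the paper leaves implicit, but it is the same route.
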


\begin{remark}
Notice that if bounded $L$-harmonic functions are known then \eqref{oneone} gives a description of
bounded solutions to \eqref{5}. See Section \ref{example} for an example.\end{remark}

{\begin{remark} \eqref{oneone} was proved by El Mabrouk for the operator $\Delta $ and $\varphi
(x,u)=\xi (x)u^{\gamma }$, $\xi \in L ^{\infty }_{loc}$, $0\leq \gamma \leq 1$ and, up to our
knowledge, there was no further development even for $\Delta $. \eqref{oneone} is also essentially
stronger  that Theorem 1 in \cite{L} where the equation $\Delta u-\xi (x)f(u)$ was considered for
$\xi\in C(\R ^d)$ and $f$ satisfying a so called ``Keller-Osserman'' condition.
\end{remark}}
A superharmonic function $s\in C^+(\Omega )$ in Theorem \ref{bigoneone} may be replaced by a
continuous solution to \eqref{5} such that $G_{\Omega}(\varphi(\cdot , v))$ is finite.
The statement is as follows:
\begin{theorem}\label{further} 
 Suppose that there is a continuous positive function $v$ and a positive $L$-harmonic function $h_v$ in $\Omega$
satisfying $$h_v=v+G_{\Omega}(\varphi(\cdot,v)).$$ Then there is one-to-one correspondence between
positive $L$-harmonic functions $h$ bounded by $h_v $ and positive continuous solutions of
\eqref{5} $u$ bounded {by $v $}, given by
\begin{equation}\label{oneone3}
h=u+G_{\Omega}(\varphi(\cdot,u)) \hbox{ in }\Omega.
\end{equation}

\end{theorem}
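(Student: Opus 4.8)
The plan is to reduce Theorem \ref{further} to Theorem \ref{bigoneone} by replacing the role of the superharmonic function $s$ with the given solution $v$, observing that every step in the proof of Theorem \ref{bigoneone} only used of $s$ two properties: that it dominates the solutions under consideration, and that $G_{\Omega}(\varphi(\cdot,s))$ is finite somewhere, which in turn forces $G_{D_n}(\varphi(\cdot,w))\leq G_{\Omega}(\varphi(\cdot,v))$ to be a legitimate dominating function when passing to the limit. Here the hypothesis already gives $h_v=v+G_{\Omega}(\varphi(\cdot,v))$, so $G_{\Omega}(\varphi(\cdot,v))=h_v-v$ is finite everywhere and continuous; thus $v$ itself plays the part of $s$.

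First I would treat the injectivity direction. Let $u\in\mathcal{C}^+(\Omega)$ be a solution of \eqref{5} with $u\leq v$. Applying Theorem \ref{resolution-in-regular-domain} on each $D_n$ of a regular exhaustion gives $H_{D_n}u=u+G_{D_n}(\varphi(\cdot,u))$ in $D_n$; since $u\leq v$ on $\partial D_n$ and $v$ is itself a supersolution (indeed a solution) of \eqref{5}, Lemma \ref{comparaison-semi-elliptic} together with $(b_1)$ of Lemma \ref{properties-of-UD} gives $H_{D_n}u\leq$ the $L$-harmonic majorant associated with $v$, namely $H_{D_n}u\leq H_{D_n}v\leq h_v$. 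As in the proof of Theorem \ref{bigoneone}, the sequence $(H_{D_n}u)$ is nondecreasing (maximum principle, using $h_{n+1}\geq u=h_n$ on $\partial D_n$) and bounded above by $h_v$, hence converges to an $L$-harmonic $h\leq h_v$, and monotone convergence in $H_{D_n}u=u+G_{D_n}(\varphi(\cdot,u))$ yields \eqref{oneone3}. Injectivity of $u\mapsto h$ is then immediate from Proposition \ref{comparaison-of-solution}, since if $h_1-h_2$ is a positive $L$-harmonic (hence $L$-superharmonic) function, then $u_1\geq u_2$, and by symmetry $u_1=u_2$.

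Next I would handle surjectivity. Given a positive $L$-harmonic $h\leq h_v$, set $u_n=U_{D_n}^{\varphi}h$. By Lemma \ref{properties-of-UD} the sequence $(u_n)$ is decreasing (the $(b_2)$ monotonicity applied to the supersolution given by $h$, noting $U^{\varphi}_{D_n}h\leq h$) and bounded below by $0$; on each $D_n$ one has $h=u_n+G_{D_n}(\varphi(\cdot,u_n))$ and, crucially, $u_n\leq U^{\varphi}_{D_n}h_v\leq U^{\varphi}_{D_n}(h_v)$; comparing with $v$ on $\partial D_n$ via Lemma \ref{comparaison-semi-elliptic} gives $u_n\leq v$ on $D_n$. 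Therefore $0\leq G_{D_n}(x,y)\varphi(y,u_n(y))\leq G_{\Omega}(x,y)\varphi(y,v(y))$, and the right-hand side is integrable in $y$ for each $x$ because $G_{\Omega}(\varphi(\cdot,v))=h_v-v<\infty$. Dominated convergence then passes the identity to the limit: $u:=\lim u_n$ satisfies $h=u+G_{\Omega}(\varphi(\cdot,u))$ in $\Omega$, $u$ is continuous by $(H_1)$ and Proposition \ref{contpot}, and $u\leq v$; moreover $u$ is nontrivial whenever $h$ is, since $u=h-G_{\Omega}(\varphi(\cdot,u))$ and $G_{\Omega}(\varphi(\cdot,u))\leq G_{\Omega}(\varphi(\cdot,v))=h_v-v$, so $u\geq h-(h_v-v)$, which cannot vanish identically unless $h\leq h_v-v$; a short additional argument (or simply invoking $u=\lim U^\varphi_{D_n}h$ and comparing with the minimal solution construction in Proposition \ref{existence of solution}) closes this point, and that $u\mapsto h$ recovers the given $h$ is built into the construction.

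The main obstacle, and the only place demanding care, is verifying that $v$ can genuinely substitute for the superharmonic barrier $s$ in the comparison arguments: the proofs of Lemma \ref{properties-of-UD} and Proposition \ref{existence of solution} are stated with a superharmonic $s$, whereas here $v$ is a solution of \eqref{5}, which is $L$-subharmonic, not superharmonic. What rescues the argument is that $v$ is in particular a supersolution of \eqref{5} (trivially, since $Lv-\varphi(\cdot,v)=0\leq 0$), so part $(b_1)$ of Lemma \ref{properties-of-UD} gives $U_{D}^{\varphi}v\geq v$ — wait, this is the wrong direction for bounding $u_n$ from above by $v$; the correct route is instead to use the comparison Lemma \ref{comparaison-semi-elliptic} directly on $D_n$ with the pair $(v,u_n)$: on $\partial D_n$ we have $u_n=h\leq h_v$, but we need $u_n\leq v$ there, which is false in general on the boundary. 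The resolution is to compare $u_n$ not with $v$ but with $U^{\varphi}_{D_n}h_v$, the solution in $D_n$ with boundary data $h_v$; monotonicity $(a)$ gives $u_n=U^{\varphi}_{D_n}h\leq U^{\varphi}_{D_n}h_v$, and then $U^{\varphi}_{D_n}h_v\leq v+G_{D_n}(\varphi(\cdot,v))\cdot(\text{something})$ — more cleanly, since $v+G_{\Omega}(\varphi(\cdot,v))=h_v$ and $v+G_{D_n}(\varphi(\cdot,v))=H_{D_n}v\leq h_v$ with $H_{D_n}v\uparrow h_v$, one shows $U^{\varphi}_{D_n}h_v\leq v$ is in fact what part $(b_1)$ of Lemma \ref{properties-of-UD} gives when applied to the supersolution $v$ with $D'\subset D_n$. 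So the clean statement is: $v$ being a supersolution, $U^{\varphi}_{D_n}v\leq v$, and $U^{\varphi}_{D_n}h\leq U^{\varphi}_{D_n}h_v$; it then remains to check $U^{\varphi}_{D_n}h_v\leq v$, which follows because $h_v\leq v$ is false — hence one must instead exhibit the bound $u_n\leq v$ by a direct maximum-principle comparison using that $v-u_n$ satisfies $L(v-u_n)=\varphi(\cdot,v)-\varphi(\cdot,u_n)$ and has the right boundary sign once one replaces $h_v$ by $H_{D_n}v$ as the boundary data for $u_n$ — i.e. one should run the exhaustion with $\tilde u_n:=U^{\varphi}_{D_n}(H_{D_n}v)$ rather than $U^{\varphi}_{D_n}h$. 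This bookkeeping — choosing the boundary data along the exhaustion so that the a priori bound $u_n\leq v$ holds — is precisely the step that needs to be executed with attention; everything downstream is then a routine repetition of the proof of Theorem \ref{bigoneone} with $v$ in place of $s$ and $h_v$ in place of the minimal harmonic majorant of $s$.
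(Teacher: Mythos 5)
Your first half (every solution $u\le v$ satisfies \eqref{oneone3} with $h=\lim H_{D_n}u\le h_v$, and injectivity via Proposition \ref{comparaison-of-solution}) is correct and is what the paper does. The genuine gap is in the surjectivity direction, and you have in fact located it yourself without closing it: for $u_n=U^{\varphi}_{D_n}h$ the bound $u_n\le v$ is false in general, because on $\partial D_n$ one only knows $u_n=h\le h_v$, not $h\le v$ (take $h=h_v$: then $u_n=U^{\varphi}_{D_n}h_v=:v_n\ge v$, typically strictly). Hence $G_{\Omega}(x,y)\varphi(y,v(y))$ does \emph{not} dominate $G_{D_n}(x,y)\varphi(y,u_n(y))$, and the ordinary dominated convergence step you invoke collapses. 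Your attempted repairs do not work either: part $(b_1)$ of Lemma \ref{properties-of-UD} applied to $v$ controls $U^{\varphi}_{D}v$, not $U^{\varphi}_{D}h_v$, and indeed $U^{\varphi}_{D_n}h_v=v_n\ge v$, so the inequality $U^{\varphi}_{D_n}h_v\le v$ you try to extract is wrong; and replacing the boundary data $h$ by $H_{D_n}v$ abandons the given $h$ altogether (the limit of $U^{\varphi}_{D_n}(H_{D_n}v)$ is $v$ itself), so it cannot produce the solution corresponding to an arbitrary $h\le h_v$.

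The missing idea, which is the whole point of the paper's proof, is to accept the weaker bound $u_n\le v_n$ (valid by Lemma \ref{comparaison-semi-elliptic}, since $u_n=h\le h_v=v_n$ on $\partial D_n$) and to compensate for the fact that the dominating functions now vary with $n$ by a generalized dominated convergence theorem, Lemma \ref{ameliorate version of dominated convergence theorem}. Concretely, Lemma \ref{integral-equation-one-one} first shows $v_n=U^{\varphi}_{D_n}h_v\downarrow v$ and hence
$$G_{D_n}(\varphi(\cdot,v_n))=h_v-v_n\longrightarrow h_v-v=G_{\Omega}(\varphi(\cdot,v)),$$
so with $f_n=G_{D_n}(x,\cdot)\varphi(\cdot,u_n)\mathds{1}_{D_n}$, $g_n=G_{D_n}(x,\cdot)\varphi(\cdot,v_n)\mathds{1}_{D_n}$ and $g=G_{\Omega}(x,\cdot)\varphi(\cdot,v)$ one has $0\le f_n\le g_n$, pointwise convergence, and $\int g_n\to\int g$; Lemma \ref{ameliorate version of dominated convergence theorem} then yields $G_{D_n}(\varphi(\cdot,u_n))\to G_{\Omega}(\varphi(\cdot,u))$ and thus $h=u+G_{\Omega}(\varphi(\cdot,u))$ with $u=\lim u_n\le\lim v_n=v$. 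Without this replacement of the fixed dominating function by the sequence $g_n$ (or some equivalent device), your argument does not go through; with it, the rest of your outline is indeed a routine adaptation of the proof of Theorem \ref{bigoneone}.
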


The above Theorem follows from {Lemmas \ref{integral-equation-one-one}} and \ref{ameliorate version
of dominated convergence theorem} below.

\begin{lemma}\label{integral-equation-one-one}
Suppose that there is a continuous positive function $v$ and a positive $L$-harmonic function $h_v$
in $\Omega$ satisfying $$h_v=v+G_{\Omega}(\varphi(\cdot,v)).$$ Then:
\begin{itemize}
\item $h_v= \lim\limits_{n \to +\infty} H_{D_n}v$ i.e. $h_v$ is the minimal $L$-harmonic function
    bounded below by $v$.
\item $v=\lim\limits_{n \to +\infty} U_{D_n}h_v$ i.e. $v$ is the maximal solution of \eqref{5}
    bounded above by $h_v$.
                                                                    \end{itemize}
\end{lemma}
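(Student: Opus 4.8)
\textbf{Proof plan for Lemma \ref{integral-equation-one-one}.}

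The plan is to exploit the regular exhaustion $(D_n)$ together with the monotonicity and comparison properties already established in Lemma \ref{properties-of-UD} and Proposition \ref{comparaison-of-solution}. Since $h_v=v+G_\Omega(\varphi(\cdot,v))$ with $h_v$ positive $L$-harmonic, the potential $G_\Omega(\varphi(\cdot,v))$ is finite (it equals $h_v-v$), hence by Proposition \ref{contpot} it is continuous on $\Omega$ and $G_{D_n}(\varphi(\cdot,v))\uparrow G_\Omega(\varphi(\cdot,v))$ by monotone convergence (the Green functions $G_{D_n}$ increase to $G_\Omega$ on the exhaustion). First I would restrict the defining identity to $D_n$: since $v$ is continuous and, on $D_n$, satisfies $Lv=\varphi(\cdot,v)$, Theorem \ref{resolution-in-regular-domain} applied with boundary data $f=v|_{\partial D_n}$ gives $v=U_{D_n}^\varphi v = H_{D_n}v - G_{D_n}(\varphi(\cdot,v))$ in $D_n$; equivalently $H_{D_n}v = v + G_{D_n}(\varphi(\cdot,v))$. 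Letting $n\to\infty$ and using the monotone convergence just noted, $H_{D_n}v\uparrow v+G_\Omega(\varphi(\cdot,v))=h_v$. For minimality, if $g$ is any $L$-harmonic function with $g\geq v$ on $\Omega$, then $g\geq v=H_{D_n}v$ on $\partial D_n$, so by the maximum principle $g\geq H_{D_n}v$ in $D_n$; passing to the limit gives $g\geq h_v$. This proves the first bullet.

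For the second bullet, set $u_n=U_{D_n}^\varphi h_v$. By Proposition \ref{existence of solution}$(i)$ applied with the superharmonic function $s=h_v$ (an $L$-harmonic function is $L$-superharmonic), the sequence $(u_n)$ is decreasing and converges to a solution $u\in\mathcal C^+(\Omega)$ of \eqref{5} with $u\leq h_v$; moreover $u_n=H_{D_n}h_v - G_{D_n}(\varphi(\cdot,u_n))$ in $D_n$, and since $h_v$ is $L$-harmonic, $H_{D_n}h_v=h_v$, so $h_v=u_n+G_{D_n}(\varphi(\cdot,u_n))$ in $D_n$. Now I would pass to the limit in this identity using the domination $0\leq G_{D_n}(x,y)\varphi(y,u_n(y))\leq G_\Omega(x,y)\varphi(y,h_v(y))$; the majorant is integrable in $y$ precisely because $G_\Omega(\varphi(\cdot,h_v))$ must be shown finite — this is the one point that needs care. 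Here I would use that $v$ is a supersolution majorized by $h_v$ together with the comparison result: since $h_v-h_v=0$ is superharmonic, Proposition \ref{comparaison-of-solution} compares $u$ (built from $h_v$) with $v$ to give $v\leq u\leq h_v$; but then $u_n=U_{D_n}^\varphi h_v\geq U_{D_n}^\varphi v = v$ by monotonicity (Lemma \ref{properties-of-UD}$(a)$), so $u\geq v$, and combined with $u\leq h_v$ and the identity for $v$ one deduces $G_\Omega(\varphi(\cdot,u))$ finite, whence by dominated convergence $h_v=u+G_\Omega(\varphi(\cdot,u))$ in $\Omega$. Finally, Proposition \ref{comparaison-of-solution} applied to the pair $(u,h_v)$ and $(v,h_v)$ forces $u=v$: both satisfy the same integral identity with the same $L$-harmonic function $h_v$, and uniqueness in that proposition gives $u=v$. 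Hence $v=\lim_n U_{D_n}^\varphi h_v$, and maximality among solutions bounded by $h_v$ is exactly Proposition \ref{existence of solution}$(ii)$--$(iii)$.

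The main obstacle I anticipate is justifying the passage to the limit in $h_v=u_n+G_{D_n}(\varphi(\cdot,u_n))$, i.e. producing an integrable majorant for $\varphi(\cdot,u_n)$ against $G_\Omega$; the naive bound by $\varphi(\cdot,h_v)$ is only useful once one knows $G_\Omega(\varphi(\cdot,h_v))<\infty$, which is not among the hypotheses. The resolution is to sandwich $u_n$ between $v$ and $h_v$ early: the lower bound $u_n\geq v$ comes from monotonicity of $U_{D_n}^\varphi$ and $U_{D_n}^\varphi v=v$, and then the pointwise limit $u$ satisfies $v\leq u\leq h_v$, so $G_\Omega(\varphi(\cdot,u))\leq G_\Omega(\varphi(\cdot,h_v))$ is not needed — instead one uses $\varphi(\cdot,u_n)\leq\varphi(\cdot,h_v)$ only locally, or better, works on each fixed $D_m$ where $u_n$ is uniformly bounded and invokes Lemma \ref{convergence-bounded-solution}, then lets $m\to\infty$ exploiting that $G_{D_m}(\varphi(\cdot,v))\uparrow h_v-v$. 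Everything else is a routine assembly of the maximum principle, monotone convergence, and the already-proven comparison and existence statements.
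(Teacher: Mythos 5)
Your treatment of the first bullet is exactly the paper's: restrict to $D_n$, get $H_{D_n}v=v+G_{D_n}(\varphi(\cdot,v))$, and pass to the limit by monotone convergence; minimality via the maximum principle is also as in the text. For the second bullet, however, you take a harder road than necessary and the one step you flag as delicate is indeed not justified as written. You claim that from $v\leq u\leq h_v$ ``and the identity for $v$ one deduces $G_{\Omega}(\varphi(\cdot,u))$ finite'' --- but $u\geq v$ gives $\varphi(\cdot,u)\geq\varphi(\cdot,v)$, so the comparison with $G_{\Omega}(\varphi(\cdot,v))=h_v-v$ goes in the wrong direction; and the naive majorant $\varphi(\cdot,h_v)$ is unusable for the reason you yourself note. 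The correct source of finiteness is Proposition \ref{existence of solution}$(ii)$ applied to the limit solution $u$ (which is majorized by $h_v$): it directly yields $u+G_{\Omega}(\varphi(\cdot,u))=h_u\leq h_v$, with no dominated convergence in the varying identity $h_v=u_n+G_{D_n}(\varphi(\cdot,u_n))$ needed at all.

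Once you have that, the paper finishes in one line by a sandwich rather than by invoking uniqueness from Proposition \ref{comparaison-of-solution}: since $v$ is a solution majorized by $h_v$, Proposition \ref{existence of solution}$(ii)$ gives $v\leq u$ for the maximal solution $u$, hence
\begin{equation*}
h_v=v+G_{\Omega}(\varphi(\cdot,v))\leq u+G_{\Omega}(\varphi(\cdot,u))=h_u\leq h_v,
\end{equation*}
forcing equality throughout and $v=u$. This bypasses entirely the need to prove $h_v=u+G_{\Omega}(\varphi(\cdot,u))$ by a limit argument (that identity falls out of the equalities above). Your proposed fallback --- working on fixed $D_m$ with Lemma \ref{convergence-bounded-solution} and then letting $m\to\infty$ --- only re-proves that $u$ is a solution, which Proposition \ref{existence of solution}$(i)$ already gives; it does not by itself produce the global integral identity. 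So: first bullet fine; second bullet reaches the right conclusion but should be repaired by routing the finiteness and the identity through Proposition \ref{existence of solution}$(ii)$ and the monotonicity of $t\mapsto\varphi(x,t)$, as the paper does.
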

\begin{proof}
First of all, clearly $v$ is a solution of \eqref{5} in $\Omega$ bounded by $h_v$.

Second, the sequence $(h_n)=(H_{D_n}v)$ is increasing to a positive $L$-harmonic function
$\tilde{h}$ such that $h_n\leq h_v$ and $$ v + G_{D_n}\varphi(\cdot,v)=h_n, \hbox{ in $D_n$}.$$ We
conclude by monotone convergence theorem
$$v+G_{\Omega}(\varphi(\cdot,v))=\tilde{h}, \hbox{ in }\Omega.$$ Consequently: $h_v=\tilde{h}$ in $\Omega$.

Third, it  remains to prove that $v$ is the maximal solution in the sense of Proposition
\ref{existence of solution}. Indeed, for the maximal one $u$ we have
$$
h_v= v+G_{\Omega}(\varphi(\cdot,v))\leq u+G_{\Omega}(\varphi(\cdot,u))\leq h_v.$$ Therefore, we
have equalities above and so $v=u$ in $\Omega$.
\end{proof}

\begin{lemma}\label{ameliorate version of dominated convergence theorem} Let $(f_n)$ and $(g_n)$ be
sequences of positive measurable functions on $\mathbb{R}^d$ such that \begin{itemize}
     \item $0\leq f_n \leq g_n$ in $\mathbb{R}^d$.
     \item $(f_n)$, $(g_n)$ converge pointwise respectively to $f$ and $g$ on  $\mathbb{R}^d$.
     \item For every $n\in \mathbb{N}$, $g_n$ and $g$ are
     integrable.
     \item $(\int_{ \mathbb{R}^d} g_n)$ converge to $\int_{ \mathbb{R}^d} g $.
   \end{itemize}
Then
\begin{equation}\label{fatou}
\lim_{n \to +\infty} \int_{ \mathbb{R}^d} f_n=\int_{ \mathbb{R}^d} f.
\end{equation}
\end{lemma}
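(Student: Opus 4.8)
The plan is to deduce \eqref{fatou} from Fatou's lemma applied twice, which is the standard route to this ``generalized dominated convergence theorem'' (Pratt's lemma). First I would record the elementary consequences of the hypotheses: since $0\le f_n\le g_n$ and each $g_n$ is integrable, each $f_n$ is integrable with $\int_{\mathbb{R}^d} f_n\le \int_{\mathbb{R}^d} g_n<\infty$; passing to the pointwise limit gives $0\le f\le g$, so $f$ is integrable as well because $g$ is. Hence every integral appearing below is a finite real number, which is what legitimizes the additive arithmetic with $\liminf$ and $\limsup$.

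Next I would apply Fatou's lemma to the sequence of nonnegative measurable functions $g_n-f_n$. Since $g_n\to g$ and $f_n\to f$ pointwise, $\liminf_{n}(g_n-f_n)=g-f$, and Fatou yields
$$\int_{\mathbb{R}^d}(g-f)\le \liminf_{n\to\infty}\int_{\mathbb{R}^d}(g_n-f_n)=\int_{\mathbb{R}^d}g-\limsup_{n\to\infty}\int_{\mathbb{R}^d}f_n,$$
where in the last equality I used that $\int_{\mathbb{R}^d} g_n\to\int_{\mathbb{R}^d} g$ is a genuine limit, so it can be pulled out of the $\liminf$, and that $\liminf_n(-\int_{\mathbb{R}^d} f_n)=-\limsup_n\int_{\mathbb{R}^d} f_n$. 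Cancelling the finite quantity $\int_{\mathbb{R}^d}g$ from both sides gives $\limsup_{n}\int_{\mathbb{R}^d}f_n\le \int_{\mathbb{R}^d}f$.

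Then I would apply Fatou's lemma directly to the nonnegative sequence $(f_n)$, obtaining $\int_{\mathbb{R}^d}f\le \liminf_{n}\int_{\mathbb{R}^d}f_n$. Combining the two inequalities gives
$$\limsup_{n\to\infty}\int_{\mathbb{R}^d}f_n\le\int_{\mathbb{R}^d}f\le\liminf_{n\to\infty}\int_{\mathbb{R}^d}f_n,$$
so the limit exists and equals $\int_{\mathbb{R}^d}f$, which is \eqref{fatou}.

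I do not anticipate a serious obstacle here; the only point requiring care is making sure each integral involved is finite before manipulating $\liminf$ and $\limsup$ additively. That is exactly where the integrability of $g_n$ and $g$ (and hence of $f_n$ and $f$) enters, together with the hypothesis $\int_{\mathbb{R}^d} g_n\to\int_{\mathbb{R}^d} g$, without which the argument breaks down.
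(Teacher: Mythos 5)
Your argument is correct and is essentially the paper's own proof: both apply Fatou's lemma to $(f_n)$ and to $(g_n-f_n)$, use the convergence $\int g_n\to\int g$ together with finiteness of these integrals to turn the second inequality into $\limsup_n\int f_n\le\int f$, and combine the two bounds. Your explicit remarks on why all integrals are finite are a welcome clarification but not a different method.
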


\begin{proof}
Applying Fatou's Lemma to the sequences $(f_n)$ and $(g_n-f_n)$ we obtain $\displaystyle\lim_{n \to
+\infty} \int_{ \mathbb{R}^d} f_n=\int_{ \mathbb{R}^d} f$. In more details:
\begin{equation}\label{fatou1}
\int_{ \mathbb{R}^d} f= \int_{ \mathbb{R}^d} \liminf\limits_{n\to +\infty} f_n
\leq \liminf\limits_{n\to +\infty}\int_{ \mathbb{R}^d}  f_n .\end{equation}

Similary:
$$\int_{ \mathbb{R}^d} (g-f)= \int_{ \mathbb{R}^d} \liminf\limits_{n\to +\infty} (g_n-f_n)\leq
\liminf\limits_{n\to +\infty} \int_{ \mathbb{R}^d} (g_n-f_n) .$$

{But 
\begin{equation*}
  \begin{aligned}
     \liminf\limits_{n\to +\infty} \int_{ \mathbb{R}^d} (g_n-f_n) & = \int_{ \mathbb{R}^d} g+\liminf\limits_{n\to +\infty} (-\int_{ \mathbb{R}^d} f_n) \\
      & = \int_{ \mathbb{R}^d} g-\limsup\limits_{n\to +\infty} \int_{ \mathbb{R}^d} f_n
\end{aligned}
\end{equation*}
Hence
$$\int_{ \mathbb{R}^d} (g-f)\leq \int_{ \mathbb{R}^d} g-\limsup\limits_{n\to +\infty} \int_{ \mathbb{R}^d} f_n$$
and so
\begin{equation}\label{fatou2}
\int_{ \mathbb{R}^d} f\geq \limsup\limits_{n\to +\infty} \int_{ \mathbb{R}^d} f_n.\end{equation}
\eqref{fatou} follows now from \eqref{fatou1} and \eqref{fatou2}.}

\end{proof}





\begin{proof}[Proof of Theorem \ref{further}]
First of all, by Lemma \ref{integral-equation-one-one}, for $v_n= U_{D_n}^{\varphi}h_v$, we have
$v=\lim\limits _{n\to +\infty}v_n, v_n\leq u$ and so
$$\lim _{n\to +\infty} G_{D_n}(\varphi(\cdot,v_n))=G_{\Omega}(\varphi(\cdot,v)).$$

Let $u$ be a solution of \eqref{5} bounded by $v$. Then $(h_n)=(H_{D_n}u)$ is an increasing
sequence of $L$-harmonic functions bounded by $h_v$ satisfying
$$u+G_{D_n}(\varphi(\cdot,u))=h_n, \hbox{  in $D_n$},$$ such that
$\lim\limits_{n\to +\infty}h_n=h$. When $n$ tends to infinity, we get
$$h=u+G_{\Omega}(\varphi(\cdot,u)), \hbox{ in $\Omega$} .$$
On the other hand, let $h$ be a $L$-harmonic function bounded by $h_v$. Then $(u_n)=(U_{D_n}h)$ is
a decreasing sequence of solutions satisfying
$$u_n+G_{D_n}(\varphi(\cdot,u_n))=h, \hbox{  in $D_n$},$$ such that
$\lim\limits_{n\to +\infty}u_n=u$. Since $u_n|_{\partial D_n}=h\leq h_v=v_n |_{\partial D_n}$, by
Lemma \ref{comparaison-semi-elliptic}, $u_n\leq v_n$ in $D_n$.
Now applying lemma \ref{ameliorate version of dominated convergence theorem}, with
\begin{align*}
f_n(y)=&G_{D_n}(x,y)\varphi(y,u_n(y)) \mathds{1}_{D_n}\\
g_n(y)=&G_{D_n}(x,y)\varphi(y,v_n(y)) \mathds{1}_{D_n}\\
f(y)=&G_{\Omega}(x,y )\varphi(y,u(y))\mathds{1}_{\Omega}
\end{align*}
we conclude that  $\lim\limits_{n\to+\infty}
G_{D_n}(\varphi(\cdot,u_n))=G_{\Omega}(\varphi(\cdot,u))$ and so
$$h=u+G_{\Omega}(\varphi(\cdot,u)).$$
\end{proof}
\section{Operations on the nonlinear term of \eqref{5}}

{In this section we consider two functions $\varphi_1,\varphi_2:\Omega\times [0,\infty[\to
[0,\infty[$ satisfying $(H_1)-(H_3)$ and equation \eqref{5} related to $\varphi _1, \varphi _2$ and
$\varphi _1 + \varphi _2$.}

\begin{prop}\label{division-varphi}

Assume that for every $c>0$, $\varphi_1(\cdot,c)\leq \varphi_2(\cdot,c)$ and the equation \eqref{5}
has a nonnegative nontrivial solution for $\varphi=\varphi_2$ bounded {by an $L$-superharmonic
function $s$}. Then the same holds also for $\varphi=\varphi_1$.

\end{prop}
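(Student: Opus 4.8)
The plan is to run the monotone construction of Proposition~\ref{existence of solution} for the nonlinearity $\varphi_1$ and to prove that the solution it produces is nontrivial by comparing it from below with the given solution for $\varphi_2$. Write $u_2\in\mathcal{C}^+(\Omega)$ for the nonnegative nontrivial solution of \eqref{5} with $\varphi=\varphi_2$ satisfying $u_2\leq s$ (here $s\in\mathcal{C}^+(\Omega)$ is the $L$-superharmonic majorant), and fix a regular exhaustion $(D_n)$ of $\Omega$.

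The first step is to observe that $u_2$ is a subsolution of \eqref{5} for the nonlinearity $\varphi_1$. Indeed $u_2\geq0$, and since $\varphi_1(\cdot,c)\leq\varphi_2(\cdot,c)$ for every $c>0$ while $\varphi_1(\cdot,0)=\varphi_2(\cdot,0)=0$ by $(H_3)$, we get $\varphi_1(\cdot,u_2)\leq\varphi_2(\cdot,u_2)$ pointwise on $\Omega$; hence $Lu_2=\varphi_2(\cdot,u_2)\geq\varphi_1(\cdot,u_2)$ in the distributional sense, i.e. $Lu_2-\varphi_1(\cdot,u_2)\geq0$.

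The second step is to apply Proposition~\ref{existence of solution}$(i)$ to the superharmonic function $s$ and the nonlinearity $\varphi_1$: the sequence $(U_{D_n}^{\varphi_1}s)$ decreases to a solution $u_1\in\mathcal{C}^+(\Omega)$ of \eqref{5} with $\varphi=\varphi_1$ satisfying $u_1\leq s$. It then remains only to show $u_1$ is nontrivial (recall that a priori the limit could be zero). For this I would use Lemma~\ref{properties-of-UD}: by monotonicity of $U_{D_n}^{\varphi_1}$ together with $u_2\leq s$ one has $U_{D_n}^{\varphi_1}s\geq U_{D_n}^{\varphi_1}u_2$, and $(b_1)$ applied to the $\varphi_1$-subsolution $u_2$ gives $U_{D_n}^{\varphi_1}u_2\geq u_2$. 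Combining, $U_{D_n}^{\varphi_1}s\geq u_2$ for every $n$, and passing to the limit yields $u_1\geq u_2$. Since $u_2$ is nontrivial, so is $u_1$, and $u_1$ is the desired nonnegative nontrivial solution of \eqref{5} for $\varphi=\varphi_1$ bounded by $s$.

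I do not expect a genuine obstacle here: the whole argument rests on the elementary remark that a $\varphi_2$-solution is automatically a $\varphi_1$-subsolution, after which the already established machinery (monotonicity and $(b_1)$ in Lemma~\ref{properties-of-UD}, and the convergence in Proposition~\ref{existence of solution}$(i)$) does the work. The only points requiring a little care are the behaviour at the level $c=0$, so that $\varphi_1(\cdot,u_2)\leq\varphi_2(\cdot,u_2)$ holds on all of $\Omega$ including where $u_2$ vanishes, and verifying that the hypotheses of the cited results are literally satisfied, namely that $u_2$ is continuous, nonnegative and a genuine distributional subsolution and that $s$ is a continuous positive $L$-superharmonic function — both being part of the standing assumptions.
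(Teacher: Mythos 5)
Your proof is correct and follows essentially the same strategy as the paper: run the decreasing scheme $U_{D_n}^{\varphi_1}$ under a superharmonic majorant and show the limit dominates the given $\varphi_2$-solution $w$, hence is nontrivial and bounded by $s$. The only cosmetic difference is that the paper iterates on the minimal $L$-harmonic majorant $h_w$ of $w$ and obtains $U_{D_n}^{\varphi_1}h_w\geq w$ from Lemma~\ref{comparaison-semi-elliptic} applied with the increasing nonlinearity $\varphi_2$, whereas you iterate on $s$ itself and obtain the same lower bound from Lemma~\ref{properties-of-UD} (monotonicity together with $(b_1)$), after the observation that $w$ is a $\varphi_1$-subsolution.
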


\begin{proof}

Let $w$ denotes a nontrivial solution of \eqref{5} for $\varphi=\varphi_2$ bounded by $s$. Then by
Proposition \ref{existence of solution}
$$w+G_{\Omega}(\varphi_2(\cdot,w))=h_w, \hbox{ in }\Omega,$$ where $h_w=\lim\limits _{n\to +\infty} H_{D_n}w$ and $(D_n)$ is
the regular exhaustion of $\Omega$. Let $u_n=U_{D_n}^{\varphi_1}h_w$ in $D_n$. It follows that
$u=\lim\limits_{n\to +\infty} u_n$ is a solution of \eqref{5} for $\varphi=\varphi_1$. Let us prove
that it is not trivial. By the fact $\varphi_1(\cdot,c)\leq \varphi_2(\cdot,c)$ for every $c>0$
$$ Lu_n-\varphi_2(\cdot,u_n)\leq Lu_n-\varphi_1(\cdot,u_n)= Lw-\varphi_1(\cdot,w)=0, \hbox{   in $D_n$.}$$

Also $u_n=h_w\geq w$ on $\partial D_n$. Then by Lemma \ref{comparaison-semi-elliptic}, we get
                         $$u_n\geq w, \hbox{   in $D_n$} .$$

Hence, $u=\lim\limits_{n\to +\infty} u_n$ is a nonnegative nontrivial solution of equation
\eqref{5} with $\varphi=\varphi_1$ bounded by $s$.

\end{proof}

\begin{prop}\label{divisionvarphi2}

Suppose, that there exists $h$, $h_1$ both $L$-harmonic functions such that $h_1\leq h$ and
$$h=u+G_{\Omega}(\varphi_1(\cdot,u)), \hbox{ in }\Omega,$$ where $u$ is a solution of the equation \eqref{5}
 for $\varphi=\varphi_1$.
 Assume further that $s=\lim\limits_{n\to +\infty} U_{D_n}^{\varphi_2}h_1$ is a nontrivial solution of \eqref{5} for $\varphi=\varphi_2$.
 Then $w=\lim\limits_{n\to +\infty} U_{D_n}^{\varphi_1+\varphi_2}h$ is a nontrivial solution of \eqref{5} for $\varphi=\varphi_1+\varphi_2.$

\end{prop}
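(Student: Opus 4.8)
The plan is to verify first that $w$ is indeed a solution of \eqref{5} for $\varphi_1+\varphi_2$, and then to show it is nontrivial by bounding it from below by a function of the shape ``nonzero $L$-harmonic minus Green potential''. For existence, note that $\varphi_1+\varphi_2$ again satisfies $(H_1)$--$(H_3)$ (Kato classes are stable under addition, a sum of continuous increasing functions is continuous increasing, and $(H_3)$ is trivial), and $h$, being $L$-harmonic, is $L$-superharmonic and lies in $\mathcal{C}^+(\Omega)$; so Proposition \ref{existence of solution}$(i)$ applies with $\varphi=\varphi_1+\varphi_2$ and gives that $U^{\varphi_1+\varphi_2}_{D_n}h$ decreases to a solution $w\le h$ of \eqref{5} for $\varphi_1+\varphi_2$.

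Next I would produce a good lower bound for $w$. Put $u_n=U^{\varphi_1}_{D_n}h$ and $s_n=U^{\varphi_2}_{D_n}h_1$; by Lemma \ref{integral-equation-one-one} (with $v=u$, $h_v=h$) the decreasing limit $u_n\downarrow u$ is exactly the $u$ of the hypothesis, and $s_n\downarrow s$ by assumption. Consider $\psi_n:=u_n+s_n-h$ in $D_n$. Since $h,h_1$ are $L$-harmonic, Theorem \ref{resolution-in-regular-domain} gives $u_n=h-G_{D_n}(\varphi_1(\cdot,u_n))\le h$ and $s_n=h_1-G_{D_n}(\varphi_2(\cdot,s_n))\le h_1\le h$, hence $\psi_n\le u_n$ and $\psi_n\le s_n$; combining $Lu_n=\varphi_1(\cdot,u_n)$, $Ls_n=\varphi_2(\cdot,s_n)$, $Lh=0$ with the monotonicity of $\varphi_1,\varphi_2$ and $(H_3)$,
$$L\psi_n=\varphi_1(\cdot,u_n)+\varphi_2(\cdot,s_n)\ \ge\ \varphi_1(\cdot,\psi_n)+\varphi_2(\cdot,\psi_n)=(\varphi_1+\varphi_2)(\cdot,\psi_n)\quad\text{in }D_n,$$
so $\psi_n$ is a subsolution there, and $\psi_n=h+h_1-h=h_1\le h=w_n$ on $\partial D_n$. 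Lemma \ref{comparaison-semi-elliptic} (applied to $\varphi_1+\varphi_2$, extended by $0$ on negatives) yields $w_n\ge\psi_n$ in $D_n$; letting $n\to\infty$ (any fixed point eventually lies in $D_n$) and using $u_n\downarrow u$, $s_n\downarrow s$ gives
$$w\ \ge\ u+s-h\ =\ s-G_{\Omega}(\varphi_1(\cdot,u)),$$
the last equality being the hypothesis $h=u+G_{\Omega}(\varphi_1(\cdot,u))$.

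To conclude nontriviality, set $p_1:=G_{\Omega}(\varphi_1(\cdot,u))=h-u$: since $u$ is locally bounded, $\varphi_1(\cdot,u)\in\mathcal{K}_d^{loc}(\Omega)$ by $(H_1)$, and $p_1$ is finite, so it is a (continuous) Green potential. By Proposition \ref{existence of solution}$(ii)$ applied to the $\varphi_2$-solution $s$, which is majorized by the $L$-superharmonic $h_1$, one has $s+G_{\Omega}(\varphi_2(\cdot,s))=h_s$ with $h_s$ an $L$-harmonic minorant of $h_1$; write $p_2:=G_{\Omega}(\varphi_2(\cdot,s))=h_s-s$, again a Green potential. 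Then $s=h_s-p_2$, so $w\ge h_s-(p_1+p_2)$ with $p_1+p_2=G_{\Omega}(\varphi_1(\cdot,u)+\varphi_2(\cdot,s))$ a finite Green potential. Now $h_s\ge s$ is a nonzero nonnegative $L$-harmonic function; if $w\equiv0$ then $h_s\le p_1+p_2$, i.e.\ $h_s$ is a nonnegative $L$-harmonic minorant of a potential, which forces $h_s\equiv0$, a contradiction. Hence $w\not\equiv0$.

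I expect the main obstacle to be the second step: the point is to identify the subsolution $\psi_n=u_n+s_n-h$ for the summed nonlinearity and to organize the two sign inequalities $\psi_n\le u_n$, $\psi_n\le s_n$ (which make $\psi_n$ a subsolution) and the boundary identity $\psi_n=h_1$ on $\partial D_n$ (which lets Lemma \ref{comparaison-semi-elliptic} run), so that after passing to the limit $w$ is squeezed below by ``$L$-harmonic minus potential''. The final potential-theoretic step is then soft, the only caution being that $p_1$ and $p_2$ are genuinely finite potentials -- which is precisely what the hypothesis $h=u+G_{\Omega}(\varphi_1(\cdot,u))$ and Proposition \ref{existence of solution}$(ii)$ secure.
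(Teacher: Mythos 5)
Your argument is correct and is essentially the paper's proof: the paper likewise bounds $w_n$ from below by $u_n+v_n-h$ (with $v_n=U_{D_n}^{\varphi_2}h$) via the minimum principle in $D_n$, passes to the limit, and concludes because a nonnegative $L$-harmonic function dominated by a Green potential must vanish. Your only variations are cosmetic: you run the second approximation with boundary data $h_1$ instead of $h$, and you phrase the key inequality as a sub/supersolution comparison for $\psi_n=u_n+s_n-h$ rather than first proving $w_n\leq u_n$, $w_n\leq v_n$ and then applying the minimum principle to $h+w_n-u_n-v_n$.
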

\begin{proof}

At first, by Proposition \ref{existence of solution} $u=\lim\limits_{n\to +\infty}
U_{D_n}^{\varphi_1}h$. Secondly, $$h_1\leq h \Rightarrow U_{D_n}^{\varphi_2}h_1\leq
U_{D_n}^{\varphi_2}h,$$ then $v=\lim\limits_{n\to +\infty} U_{D_n}^{\varphi_2}h$ is a nontrivial
solution of \eqref{5} for $\varphi=\varphi_2$ bounded above by $h$ satisfying
$$v+G_{\Omega}(\varphi(\cdot,v))=h_v, \hbox{ in }\Omega,$$ where $h_v$ is the minimal $L$-harmonic
function bounded below by $v$.

 Thirdly, we denote $u_n=U_{D_n}^{\varphi_1}h,$$v_n=U_{D_n}^{\varphi_2}h$ and $w_n=U_{D_n}^{\varphi_1+\varphi_2}h$.

 As in the proof of Proposition \ref{division-varphi}, we get $u_n\geq w_n$ and $v_n\geq w_n$ in
 $D_n$. Let us prove that $w=\lim\limits_{n\to +\infty} w_n$ is non trivial.

 $$\left\{
 \begin{array}{ll}
 L(h+w_n-u_n-v_n)\leq 0, &\hbox{in $D_n$,}\\
 h+w_n-u_n-v_n=0, & \hbox{ on $\partial D_n$}.
 \end{array}
 \right.
$$
Hence in $D_n$  $$h+w_n-u_n-v_n\geq 0.$$

Then   $$(h-u_n)+(h_v-v_n)+w_n\geq h_v.$$ Tending $n$ to infinity, we get
$$G_{\Omega}(\varphi_1(\cdot,u))+G_{\Omega}(\varphi_2(\cdot,v))+w=(h-u)+(h_v-v)+w\geq h_v, \hbox{  in }\Omega.$$
Suppose now that $w=0$ then $$G_{\Omega}(\varphi_1(\cdot,u))+G_{\Omega}(\varphi_2(\cdot,v))\geq
h_v, \hbox{ in }\Omega .$$ Since $G_{\Omega}(\varphi_1(\cdot,u))+G_{\Omega}(\varphi_2(\cdot,v))$ is
a potential, we have $h_v=0$ implying $v=0$. The contradiction proves $w\neq 0$.

\end{proof}
\section{Existence of bounded solutions in Greenian domains}\label{thinn}
{\it Throughout this section,we assume that $L1=0$.}
Then $(H_4)$ is not necessary for existence of bounded solutions in a Greenian domain $\Omega $. A
sufficient and necessary condition for that is the integrability of
$$
G_{\Omega}(\cdot,y)\varphi(y,c_0)$$ on $\Omega \setminus A$, where $A$ is a ``so called'' thin set
at $\infty$, see Theorem \ref{sufficient-and-necessary-condition-existence-solution} below. {The
latter is proved below under fairly weak assumptions on $\varphi $ and it generalizes the results
known for $\Delta $ as well.} A subset $A\subset \Omega$ is called \emph{thin} if there is a
nonnegative continuous $L$-superharmonic function $s$ on $\Omega$ such that
 $$ s\geq 1 \hbox{ on } A\ \mbox {but}\ \inf _{x\in \Omega } s(x)< 1.$$
Given a particular operator it is often not so complicated to produce such a function and to obtain
existence of bounded solutions (see the example in the next section).

 We start
with the following theorem.

\begin{theorem}\label{sufficient-thin-at-infty}
If there exists $c>0$ such that $\{ \varphi(\cdot,c)>0\}$ is thin at $\infty$, then
$u_c=\lim\limits _{n\to +\infty} U_{D_n}^{\varphi}c$ is a nonnegative nontrivial bounded solution
of equation \eqref{5}.
\end{theorem}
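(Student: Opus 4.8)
The plan is to exploit the construction in Proposition~\ref{existence of solution}(i) together with the defining property of a thin set. Fix $c>0$ such that $A:=\{\varphi(\cdot,c)>0\}$ is thin at $\infty$, and let $s\in\mathcal{C}^+(\Omega)$ be a nonnegative continuous $L$-superharmonic function with $s\geq 1$ on $A$ and $m:=\inf_{x\in\Omega}s(x)<1$. Since $L1=0$, the constant function $c$ is $L$-harmonic, hence $L$-superharmonic, so Proposition~\ref{existence of solution}(i) applies with this constant as the dominating superharmonic function: the sequence $(U_{D_n}^{\varphi}c)$ is decreasing to a solution $u_c\in\mathcal{C}^+(\Omega)$ of \eqref{5} with $u_c\leq c$. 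Thus $u_c$ is automatically a bounded nonnegative solution, and the entire content of the theorem is to show $u_c\not\equiv 0$.

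The key idea for nontriviality is to produce a fixed \emph{uniform} lower bound for $U_{D_n}^{\varphi}c$ on a large portion of $\Omega$ using a comparison with a supersolution built from $s$. First I would rescale: let $t$ be a constant with $m<t<1$ (so $t<1\leq s$ on $A$ but $t>m=\inf s$), and consider the continuous function $w:=c\,\min\{s/t,\,1\}$ — equivalently $w = c$ where $s\geq t$ and $w=cs/t$ where $s<t$. On the open set $\{s<t\}$ we have $w=(c/t)s$, which is $L$-superharmonic, and there $\varphi(\cdot,w)\le\varphi(\cdot,c)=0$ since $\{s<t\}\cap A=\varnothing$ (because $s\geq1>t$ on $A$); hence $Lw\le 0 = \varphi(\cdot,w)$ there. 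On $\{s\geq t\}$, $w=c$ is $L$-harmonic, so $Lw=0\geq \varphi(\cdot,w)$ since $\varphi\ge0$. Care is needed where the two pieces meet, but the minimum of two $L$-supersolutions is an $L$-supersolution, so globally $Lw\le\varphi(\cdot,w)$ in the distributional sense, i.e. $w$ is a supersolution of \eqref{5}. Moreover $w$ is not identically zero and in fact $w\geq c(1-\varepsilon)$ wherever $s\geq (1-\varepsilon)t$; the point is that $\{s\geq t\}$ is nonempty and, more importantly, $w$ does not vanish anywhere that $s$ is bounded away from $0$, which is a set with nonempty interior since $m<t$.

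Next I would apply Lemma~\ref{properties-of-UD}(b), part $(b_1)$: since $w$ is an $L$-supersolution of \eqref{5} in $\Omega$ (and, for each fixed $D_n$, we may take a regular subdomain $D'\Subset D_n$ to meet the hypothesis), we get $U_{D_n}^{\varphi}w\le w$. This is the wrong direction; instead I would compare $c$ with $w$ directly on each $D_n$. We have $w\le c$ on $\Omega$, so by monotonicity of $U_{D_n}^{\varphi}$ (Lemma~\ref{properties-of-UD}(a)), $U_{D_n}^{\varphi}w\le U_{D_n}^{\varphi}c$. On the other hand, because $w$ is a supersolution of \eqref{5}, part $(b_1)$ of Lemma~\ref{properties-of-UD} gives $U_{D_n}^{\varphi}w\ge w$ — using that $w$ is a \emph{subsolution}? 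Here is the main obstacle: one must check which monotonicity of $U_{D_n}$ against sub/supersolutions actually yields a lower bound surviving the limit $n\to\infty$. The correct route is: $w$ is a subsolution on the open set where $\varphi(\cdot,w)=0$ trivially, but more robustly, $w\le H_{D_n}c=c$ on $\partial D_n$ forces, via Lemma~\ref{comparaison-semi-elliptic} applied to $w$ (supersolution) and $U_{D_n}^{\varphi}c$ (solution) with boundary data $w\le c$ on $\partial D_n$... this still gives $w\ge U_{D_n}^{\varphi}c$, the wrong way. The resolution is to instead run the construction downward from $w$ itself: since $w$ is a continuous $L$-superharmonic-like supersolution, $(U_{D_n}^{\varphi}w)$ decreases to \emph{some} solution $u_w\le w$, and one shows $u_w=u_c$ by a squeezing argument using $w\le c$ and the fact that on $\{s<t\}$ both $w$ and $c$ solve the same (zero right-hand side) equation. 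The genuinely delicate point — and the one I expect to be the crux — is proving $u_w\neq 0$: this uses that $u_w\ge$ (the $L$-harmonic minorant coming from $w$ near the set $\{s\ge t\}$ where $w\equiv c$), together with the domination principle / minimum principle to propagate positivity, exactly mirroring the argument that a thin set cannot "absorb" all the mass. I would model this step on the proof of Theorem~3 in \cite{Mabrouk}, adapting it via Proposition~\ref{comparaison-of-solution} and the characterization of $h_{u}$ in Proposition~\ref{existence of solution}(iii) to conclude that $h_{u_w}$ is a nonzero $L$-harmonic minorant of $w$, hence $u_w\neq0$, hence $u_c\neq 0$.
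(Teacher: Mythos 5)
Your reduction of the theorem to the nontriviality of $u_c$ is fine (constants are $L$-superharmonic since $L1=0$, so Proposition~\ref{existence of solution}(i) gives the bounded solution $u_c\le c$), but the proof of nontriviality — which is the entire content of the statement — is not actually carried out. You construct a \emph{super}solution $w=c\min\{s/t,1\}$, and supersolutions can only be compared from above: Lemma~\ref{properties-of-UD}$(b_1)$ gives $U_{D_n}^{\varphi}w\le w$, and Lemma~\ref{comparaison-semi-elliptic} with a solution on one side needs $w\ge c$ on $\partial D_n$ (false where $s<t$) to conclude anything, so every comparison you attempt runs the wrong way, as you yourself note. Your proposed repair — run the exhaustion scheme from $w$ to get $u_w\le w$ and then argue $u_w\ne 0$ by "modelling on Theorem~3 of \cite{Mabrouk}" — just relocates the same difficulty: nothing in the proposal prevents $u_w$ (or $u_c$) from being identically zero, and the squeezing/domination step you flag as "the crux" is exactly what is missing. (A smaller slip: on $\{s\ge t\}$ you write $Lw=0\ge\varphi(\cdot,w)$, which is false on $\{s\ge t\}\cap A$ where $\varphi(\cdot,c)>0$; the correct inequality for a supersolution is $Lw=0\le\varphi(\cdot,w)$.)

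The missing idea, which is the heart of the paper's proof, is a \emph{quantitative lower bound} for $u_n=U_{D_n}^{\varphi}c$ obtained by working on the potential side. Rescale the thinness function so that it dominates the constant on the thin set: put $A=\{\tilde s_0\ge 1\}\supset\{\varphi(\cdot,c)>0\}$ and $s=\min\{c,\,c\tilde s_0\}$, so $s\equiv c$ on $A$ and $s(x_0)<c$ at some point. Since $c=u_n+G_{D_n}(\varphi(\cdot,u_n))$ in $D_n$, the potential $p_n=G_{D_n}(\varphi(\cdot,u_n))$ satisfies $p_n\le c=s$ on $A\cap D_n$, while $p_n$ is $L$-harmonic off $A$ because $0\le u_n\le c$ forces $\varphi(\cdot,u_n)\le\varphi(\cdot,c)=0$ there; the domination principle then yields $s\ge p_n=c-u_n$ on all of $D_n$ (equivalently, one may compare $u_n$ with the nonnegative \emph{sub}solution $c-s$, for which $\varphi(\cdot,c-s)\equiv 0$ — note the rescaling $s\equiv c$ on $A$ is what makes this work). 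Letting $n\to\infty$ gives $u_c\ge c-s$, hence $u_c(x_0)\ge c-s(x_0)>0$. Without this step (or an equivalent lower-bound mechanism), your argument establishes only that $u_c$ exists, is bounded and nonnegative, which is not the theorem.
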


\begin{proof}
We have to prove that $u_c$ is nontrivial. Let $B=\{\varphi(\cdot,c)>0\}$ and let $\tilde{s_0}$ be
a continuous nonnegative $L$-superharmonic function $\tilde{s_0}$ on $\Omega$ such that
$\tilde{s_0}\geq 1$ on $B$ and there is $x_0\in \Omega\backslash B$ such that $\tilde{s_0}(x_0)<1$.
We want to extend $B$ in order to obtain a closed thin set at $\infty$.
We denote $$A= \{x\in\Omega , \tilde{s}_0(x)\geq 1\} .$$ It is clear that $A$ is a closed subset of
$\Omega$ thin at $\infty$ containing $B$ i.e. $B\subset A$. Let $s=\inf\{c, cs_0\}$. $s$ is a
continuous $L$-superharmonic function such that $s(x_0)=cs_0(x_0)<c$. $cs_0\geq c$ on $A$, which
implies $s=c$ on $A$.

Let $D$ a bounded regular subdomain of $\Omega$ such that $\overline{D}\subset \Omega$ and $D\cap
A\not = \O$. We denote $u=U_D^{\varphi}c$ in $\Omega$. First we prove that
\begin{equation}\label{sdomin} s\geq G_D(\varphi(\cdot,u)), \hbox{  on $D$}.\end{equation}
\ref{sdomin} holds on $A\cap \overline {D}$ because $s\equiv c$ on  $A$ and $
G_D(\varphi(\cdot,u))\leq c$. Now we are going to use
the domination principle (Proposition 44 in \cite{Ghardallou} or Proposition 35 in
\cite{Ghardallou-thesis})
to prove that \ref{sdomin} holds in the rest of $D$ as well. Indeed, $s$ is a nonnegative
continuous $L$-superharmonic function in $\Omega$. In addition, $p= G_D(\varphi(\cdot,u))$ is a
continuous potential in $D$ satisfying $Lp=-\varphi(\cdot,u)$.
Also, $\Omega\backslash A\subset \Omega\backslash B,$ and $0\leq u\leq c$. Hence

$$\left\{
                                                                                       \begin{array}{ll}
                                                                                         0\leq \varphi(\cdot,u) \leq \varphi(\cdot,c), & \hbox{in $D$;} \\
                                                                                         \varphi(\cdot,c)=0, & \hbox{in $\Omega\backslash B$.}
                                                                                       \end{array}
                                                                                     \right.$$
meaning $p$ being $L$-harmonic in $D\backslash D\cap A.$ We are allowed then to apply the
domination principle and \eqref{sdomin} follows.

Now let $(D_n)$ be a regular exhaustion of $\Omega$, such that $D_0\cap A\not= \O$.
 Then, since constants are $L$-harmonic, by Theorem \ref{resolution-in-regular-domain}, for any $n\geq 1$  there is a positive continuous function $u_n$
in $D_n$ such that $$c=u_n+G_{D_n}(\varphi(\cdot,u_n)), \hbox{ in }D_n,$$ where
$u_n=U_{D_n}^{\varphi}c.$ As in \eqref{sdomin}, we have $$s\geq G_{D_n}(\varphi(\cdot,u_n))=c-u_n,
\hbox{ on $D_n$}.$$ When $n$ tends to $\infty$, we get $s\geq c-u_c$. In particular, $u(x_0)\geq
c-s(x_0)>0$ which implies that $u$ is nontrivial on $\Omega$.

\end{proof}


\begin{theorem}\label{sufficient-and-necessary-condition-existence-solution}\ \\


{Suppose that there exists a Borel set $A\subset \Omega$ which is thin at $\infty$ and $c_0>0$ such
that
\begin{equation}\label{thin}
\int_{\Omega\backslash A} G_{\Omega}(\cdot,y)\varphi(y,c_0)\,dy\not\equiv
\infty.\end{equation}
 Then equation \eqref{5} has a nonnegative nontrivial bounded solution in $\Omega$.
On the other hand, existence of a bounded solution to \eqref{5} implies that
 \begin{equation}\label{thin-bounded}
\int_{\Omega\backslash A} G_{\Omega}(\cdot,y)\varphi(y,c_0)\,dy
\end{equation} is bounded for a set $A$ which is thin at $\infty$.}
\end{theorem}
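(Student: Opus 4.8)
The plan is to split the statement into its two halves and treat the sufficiency part first. For sufficiency, suppose \eqref{thin} holds for some Borel set $A$ thin at $\infty$ and some $c_0>0$. Since $A$ is thin, there is a continuous nonnegative $L$-superharmonic $s_0$ with $s_0\geq 1$ on $A$ and $\inf_\Omega s_0<1$; by the same argument as in the proof of Theorem \ref{sufficient-thin-at-infty}, after replacing $A$ by $\{s_0\geq 1\}$ we may assume $A$ is closed. The idea is to build a superharmonic majorant $s$ for a solution by adding two pieces: a bounded multiple of $s_0$ to control the behaviour on $A$, and the Green potential $p:=G_\Omega(\varphi(\cdot,c_0)\mathds{1}_{\Omega\setminus A})$ to control things off $A$. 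By \eqref{thin} and Proposition \ref{contpot}, $p$ is a (finite, continuous) potential on $\Omega$. Set $s=p+\lambda s_0$ for a suitably large constant $\lambda$ chosen so that $s\geq c_0$ on $A$ (possible since $s_0\geq 1$ on $A$). Then $s$ is a positive continuous $L$-superharmonic function and $G_\Omega(\varphi(\cdot,s))$ — well, here is the subtlety: $s$ itself need not be bounded. What we actually want is to apply Theorem \ref{bigoneone} or, more cleanly, to run the exhaustion argument directly: let $(D_n)$ be a regular exhaustion with $D_0\cap A\neq\emptyset$ and put $u_n=U_{D_n}^\varphi c_0$, so $c_0=u_n+G_{D_n}(\varphi(\cdot,u_n))$ in $D_n$. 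As in Theorem \ref{sufficient-thin-at-infty} I would use the domination principle to show $G_{D_n}(\varphi(\cdot,u_n))\leq s$ on $D_n$: on $A\cap\overline{D_n}$ this holds because $s\geq c_0\geq G_{D_n}(\varphi(\cdot,u_n))$, while off $A$ the measure $\varphi(\cdot,u_n)\mathds{1}_{D_n\setminus A}\leq \varphi(\cdot,c_0)\mathds{1}_{\Omega\setminus A}$ so its potential is dominated by $p\leq s$, and on $D_n\setminus A$ the potential $G_{D_n}(\varphi(\cdot,u_n))$ is harmonic off the support. Letting $n\to\infty$, $u_n\downarrow u_{c_0}$, which is a solution of \eqref{5} bounded by $c_0$, hence bounded; and $c_0-u_{c_0}=\lim_n G_{D_n}(\varphi(\cdot,u_n))\leq s$, so at a point $x_0$ where $s(x_0)$ can be made $<c_0$ (choose $\lambda$ carefully, or rather: $p$ is a potential so $\inf p=0$ along a sequence, and $\inf s_0<1$) we get $u_{c_0}(x_0)>0$, i.e. $u_{c_0}$ is nontrivial.

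For the converse, suppose $u\in\mathcal{C}^+(\Omega)$ is a bounded nontrivial solution of \eqref{5}, say $u\leq M$. Since $L1=0$, the constant $M$ is $L$-harmonic and hence $L$-superharmonic, so by Proposition \ref{existence of solution}(ii) applied with $s=M$ we get
\begin{equation*}
u+G_\Omega(\varphi(\cdot,u))=h_u\leq M \quad\text{in }\Omega,
\end{equation*}
where $h_u$ is an $L$-harmonic minorant of $M$. Thus $G_\Omega(\varphi(\cdot,u))\leq M$ is a bounded potential. Now $u$ is a nontrivial solution, so $u$ is strictly positive on a nonempty open set, and since $\varphi$ is increasing in the second variable, $\varphi(\cdot,u)>0$ on a set of positive measure; more to the point, we want a set $A$ thin at $\infty$ that captures the region where $u$ is small. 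The natural choice is $A=\{x\in\Omega:\ u(x)\leq c_0/2\}$ (or some threshold), paired with the constant $c_0$ from the statement — but wait, $c_0$ is a given constant in the statement, so the claim is that \eqref{thin-bounded} is bounded for \emph{some} thin $A$ and this particular $c_0$. On $\Omega\setminus A=\{u>c_0/2\}$... this does not immediately bound $\varphi(\cdot,c_0)$ by $\varphi(\cdot,u)$ unless $c_0/2\geq c_0$, which fails. So instead take $A=\{x:\ u(x)<c_0\}$; then on $\Omega\setminus A$ we have $u\geq c_0$, hence $\varphi(\cdot,c_0)\leq\varphi(\cdot,u)$ by monotonicity, and therefore
\begin{equation*}
\int_{\Omega\setminus A}G_\Omega(\cdot,y)\varphi(y,c_0)\,dy\leq \int_\Omega G_\Omega(\cdot,y)\varphi(y,u(y))\,dy=G_\Omega(\varphi(\cdot,u))\leq M,
\end{equation*}
which is bounded. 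It remains to check that $A=\{u<c_0\}$ is thin at $\infty$. Here is where I expect the main obstacle: one must produce a continuous nonnegative $L$-superharmonic $s$ with $s\geq 1$ on $A$ but $\inf_\Omega s<1$. Since $u$ is bounded and nontrivial, $\sup_\Omega u>0$; if $\sup_\Omega u\geq c_0$ then there is a point where $u\geq c_0$, i.e. $\Omega\setminus A\neq\emptyset$, and the candidate is $s=(u_0-u)^+/(u_0-c_0)$ for a suitable constant $u_0$ with $c_0<u_0\leq\sup u$ — but $(u_0-u)^+$ is $L$-subharmonic (as the positive part involves $-u$ which is superharmonic... actually $-u$ is superharmonic so $u_0-u$ is superharmonic, and the truncation $\min(u_0-u, u_0-c_0) = u_0 - \max(u,c_0)$; since $\max(u,c_0)$ is subharmonic, $u_0-\max(u,c_0)$ is superharmonic). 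Thus take
\begin{equation*}
s=\frac{u_0-\max(u,c_0)}{u_0-c_0},
\end{equation*}
which is continuous, nonnegative (as $u\leq u_0$ after choosing $u_0\geq M$, say $u_0=M$ assuming $M>c_0$; if $M\leq c_0$ then $A=\Omega$ and one argues separately or the statement is read with the convention that $\Omega$ itself counts as thin only in trivial cases — this edge case needs care), $L$-superharmonic, equals $1$ exactly on $A=\{u<c_0\}$... no: on $\{u\leq c_0\}$ we get $s=1$, and on $\{u>c_0\}$ we get $s<1$, and $\inf_\Omega s = (u_0-\sup u)/(u_0-c_0)<1$ precisely when $\sup u>c_0$. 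So $A'=\{u\leq c_0\}\supseteq A$ is closed and thin at $\infty$, and running the above estimate with $A'$ in place of $A$ still gives $\varphi(\cdot,c_0)\leq\varphi(\cdot,u)$ on $\Omega\setminus A'=\{u>c_0\}$ (here we even have strict inequality $u>c_0$, so this is fine — we only need $u\geq c_0$ which holds). Hence \eqref{thin-bounded} is bounded with the thin set $A'$.

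The one genuinely delicate point is the case $\sup_\Omega u\leq c_0$: then $\{u>c_0\}=\emptyset$ and the construction above degenerates. In that case $\Omega\setminus A'=\emptyset$ so $\int_{\Omega\setminus A'}G_\Omega(\cdot,y)\varphi(y,c_0)\,dy=0$ is trivially bounded, but we must exhibit \emph{some} thin set; one can instead pick a smaller threshold $c_1$ with $0<c_1<\sup u$, note that $\varphi(\cdot,c_0)$ may then not be controlled — so really one should first replace $c_0$ by $\min(c_0,\tfrac12\sup u)$ if needed. Re-reading the statement, $c_0$ is existentially quantified on the sufficiency side and the same $c_0$ reappears on the necessity side, so the honest reading is: the necessity claim should be "there exist $A$ thin and $c_0>0$ with \eqref{thin-bounded} bounded", and I would take $c_0=\tfrac12\sup_\Omega u>0$ and $A'=\{u\leq c_0\}$, for which $\Omega\setminus A'=\{u>\tfrac12\sup u\}\neq\emptyset$, the superharmonic function $s=(u_1-\max(u,c_0))/(u_1-c_0)$ with $u_1=\sup u$ works as above, and $\varphi(\cdot,c_0)\leq\varphi(\cdot,u)$ on $\Omega\setminus A'$ gives boundedness by $M$. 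This resolves all cases, and the proof is complete modulo these routine verifications of superharmonicity and the application of the domination principle exactly as in Theorem \ref{sufficient-thin-at-infty}.
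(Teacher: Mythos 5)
Your necessity half is correct and is essentially the paper's own argument: the paper passes to the maximal solution $v=\lim_n U_{D_n}^{\varphi}c$ below a constant bound $c$, picks $c_0$ strictly between two values of $v$, and uses $s=(c-v)/(c-c_0)$ as the thinness witness for $A=\{v\le c_0\}$, then bounds $\int_{\Omega\setminus A}G_\Omega(\cdot,y)\varphi(y,c_0)\,dy$ by $G_\Omega(\varphi(\cdot,v))\le c$ exactly as you do; your variant with $c_0=\tfrac12\sup u$, $A'=\{u\le c_0\}$ and $s=(u_1-\max(u,c_0))/(u_1-c_0)$ works for the same reasons ($\max(u,c_0)$ is subharmonic and constants are $L$-harmonic because $L1=0$), and the quantifier reading you settle on (existence of some thin $A$ and some $c_0>0$) is the one the paper uses.

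The sufficiency half, however, has a genuine gap at the decisive step. Granting the estimate $c_0-u_n=G_{D_n}(\varphi(\cdot,u_n))\le s=p+c_0 s_0$ (which should be proved by splitting $G_{D_n}(\varphi(\cdot,u_n))$ into the part charged on $A\cap D_n$, handled by the domination principle against $c_0 s_0$, and the part charged on $D_n\setminus A$, which is $\le p$ by monotonicity of $\varphi$ and $G_{D_n}\le G_\Omega$ — as literally phrased your domination step checks the comparison only on $A$ although the charge is not supported there), you still must exhibit one point $x_0$ with $s(x_0)<c_0$, i.e.\ $p(x_0)<c_0(1-s_0(x_0))$. Your justification — ``$\inf p=0$ and $\inf_\Omega s_0<1$'' — does not yield this: the two infima can be approached on disjoint parts of $\Omega$, and the charge of $p$ sits precisely on $\Omega\setminus A\supseteq\{s_0<1\}$, so there is no a priori reason why $p$ should be small anywhere on $\{s_0<1\}$. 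Without that point the limit $u_{c_0}$ could a priori be trivial, which is exactly what the theorem must rule out. The gap can be closed: if $p+c_0s_0\ge c_0$ on all of $\Omega$, then $c_0(1-s_0)^+\le p$ everywhere, and $c_0(1-s_0)^+$ is a nonnegative subharmonic function dominated by a potential, so its least $L$-harmonic majorant is a nonnegative $L$-harmonic minorant of $p$, hence $0$, forcing $s_0\ge 1$ on $\Omega$ and contradicting thinness. The paper avoids the issue differently: it splits $\varphi=\mathds{1}_A\varphi+\mathds{1}_{\Omega\setminus A}\varphi$, obtains a nontrivial bounded solution for the first piece from Theorem \ref{sufficient-thin-at-infty} (there the charge lives inside the thin set, so the majorant $\min(c_0,c_0s_0)$ alone suffices and the required point is immediate), obtains from \eqref{thin} a solution $v$ with $c_0=v+G_\Omega(\mathds{1}_{\Omega\setminus A}\varphi(\cdot,v))$ via Theorem \ref{bigoneone} (the constant $c_0$ being $L$-harmonic since $L1=0$), and then combines the two through Proposition \ref{divisionvarphi2}, whose nontriviality argument is precisely the potential-versus-harmonic-minorant trick above. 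Either supply that argument or route the proof through the paper's decomposition; as written, the proposal is missing this idea.
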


{\begin{remark} The above statement was proved by El Mabrouk for $\Delta $ and $\varphi (x,u)=\xi
(x)u^{\gamma }$, $\xi \in L ^{\infty }_{loc}$, $0\leq \gamma \leq 1$.
\end{remark}}


\begin{proof}
Suppose first that \eqref{thin} is satisfied. For every $c>0$, we have $$\varphi(\cdot,c)=1_A
\varphi(\cdot,c)+1_{\Omega-A} \varphi(\cdot,c)=\varphi_1(\cdot,c)+\varphi_2(\cdot,c).$$ Observe
that $\{ \varphi_1(\cdot,c)>0\}\subset A$ is thin at $\infty$, so in view of Theorem
\ref{sufficient-thin-at-infty}, $u_{c_0}=\lim\limits _{n\to +\infty } U_{D_n}^{\varphi_1}c_0$ is a
nonnegative nontrivial bounded solution to \eqref{5} with $\varphi=\varphi_1$. In addition
$$\int_{\Omega} G_{\Omega}(\cdot,y)\varphi_2(y,c_0)\,dy= \int_{\Omega\backslash A} G_{\Omega}(\cdot,y)\varphi(y,c_0)\,dy\not\equiv \infty.$$
Hence in view of Proposition \ref{existence of solution}, there exists a solution $v$ to \eqref{5}
with $\varphi=\varphi_2$ such that $$c_0=v+G_{\Omega}(\varphi_2(\cdot,v)), \hbox{  in  }\Omega.$$
Consequently, by Proposition \ref{divisionvarphi2}, $w=\lim\limits _{n\to +\infty}
U_{D_n}^{\varphi}c_0=\lim\limits _{n\to +\infty} U_{D_n}^{\varphi_1+\varphi_2}c_0$ is a nonnegative
nontrivial bounded solution  to \eqref{5}.

{Now, let $w$ be a nontrivial solution of \eqref{5} bounded by $c$.
Then $v=\lim\limits _{n \to +\infty }U_{D_n}^{\varphi}c$ is a nonnegative nontrivial solution of
equation \eqref{5} as well. Hence, there exist $0 < c_0 < c$ and $x_0,x_1\in\Omega$ such that $ 0<
v(x_0) \leq c_0 < v(x_1) \leq c$. Let $A=\{  v\leq {c_0} \}$ and $s=\frac{c-v}{ c - c_0 }$. Then
$s$ is a nonnegative
    continuous $L$-superharmonic function in $\Omega$, $s \geq 1$ on $A$ and $s(x_1)<1$. So
$A$ is thin at $\infty$ and
$$\int_{\Omega\backslash A} G_{\Omega}(\cdot,y)\varphi(y, c_0)\,dy  \leq  \int_{\Omega}
G_{\Omega}(\cdot,y)\varphi(y,v(y))\,dy\leq c.$$ Hence we get not only \eqref{thin} but also boundedness of
$\int_{\Omega\backslash A} G_{\Omega}(\cdot,y)\varphi(y,c_0)\,dy$.}
%
\end{proof}
\section{Example} \label{example}
Let $ \Omega =H_n = \{ (x,y), x\in\mathbb{R}^n , y>0   \}$ be the upper half space of
$\mathbb{R}^{n+1}$ and
$$\Delta =\sum_{k=1}^n \partial_k^2 + \partial_y^2$$ the Laplace operator in $\mathbb{R}^{n+1}$.
The corresponding Poisson kernel is given by ({ see \cite{Stein-Weiss}) $$ P(x,y) = c_n \frac{y}{
(|x|^2 + y^2)^\frac{n+1}{2} },$$ where $y\in \mathbb{R}_+^*$, $x\in \mathbb{R}^n$ and $c_n^{-1} =
\int_{\mathbb{R}^n} \frac{1}{( |x| + 1 )^{\frac{n+1}{2}}} \,dx$. Then every bounded
$\Delta$-harmonic function in $H_n$ is the Poisson integral of a bounded function in
$\mathbb{R}^n$. Then there is one-to-one correspondence between bounded positive $\Delta$-harmonic
function in $H_n$ and bounded positive functions in $\mathbb{R}^n$ given by
$$ h(x,y) = c_n \int_{\mathbb{R}^n} f(x-z)  \frac{y}{ (|z|^2 + y^2)^\frac{n+1}{2} }\,dz. $$ In
addition, $$\lim_{y\to 0} h(\cdot ,y) = f,\quad \mbox{weakly}$$ and $$ \sup_{x\in\mathbb{R}^n}
h(x,y) \leq ||f||_{\infty}, \hbox{ for every $y>0$.}$$

Let $\mathcal{L}^{\infty, +}(\mathbb{R}^n)$ be the set of bounded positive functions in
$\mathbb{R}^n$. It follows that if $\varphi$ satisfies $(H_1)-(H_4)$ then there is one-to-one
correspondence between $\mathcal{L}^{\infty, +}(\mathbb{R}^n)$ and continuous positive bounded
solutions of \eqref{5} in $H_n$ given by
$$f\to h \to u=h-G_{\Omega}\varphi(\cdot, u).$$
If $G_{\Omega}\varphi(\cdot, u)$ vanished at the boundary $\mathbb{R}^n\times \{0\}$ of $H_n$ then
the boundary values of $u$ and $h$ are the same.

It is easy to construct in this case thin sets and so to be able to apply criterion \eqref{thin}
for existence of bounded solutions. Let $s(x,y)=cy^{\gamma }$, for $0<\gamma <1$. Then $\Delta s
<0$, $s>1$ for $y>c^{-1\slash \gamma }$ and $s <1$ for $y<c^{-1\slash \gamma }$. This shows that
any set $A=\{ (x,y): y>c^{ -\frac{1}{\gamma}}\}$ is thin at $\infty$ and so for any function
$\varphi $ such that $\varphi (\cdot , c_0)$ is integrable against $G_{H_n}((0,1),\cdot )$ on $\{
(x,y): y<c^{ - \frac{1}{\gamma}}\}$ we have one-to-one correspondence of bounded solutions to
\eqref{5} and bounded $L$-harmonic functions.






\end{document}